\DeclareMathOperator{\Tr}{Tr}
\newtheorem{theorem}{Theorem}
\newcommand{\inprod}[2]{\left\langle #1, #2 \right\rangle}
\newtheorem*{remark}{Remark}
\providecommand{\R}{\mathbb R}
\providecommand{\E}{\mathsf E}
\numberwithin{equation}{section}
\newtheorem{thm}{Theorem}[section]
\newtheorem{lem}[thm]{Lemma}
\newtheorem{prop}[thm]{Proposition}
\newtheorem{coro}[thm]{Corolary}
\theoremstyle{definition}
\newtheorem*{rem}{Remark}
\def\norm#1{\Vert#1\Vert}
\def\norm#1{\Vert#1\Vert}
\def\abs#1{|#1|}
\title{Constructing exchangeable pairs by diffusion on manifolds and its application}
\author{Weitao Du\thanks{{ weitao.du@northwestern.edu}. }}
\date{}
\begin{document}
\maketitle

\begin{abstract}

We construct a continuous family of exchangeable pairs by perturbing the random variable through diffusion processes on manifolds in order to apply Stein method to certain geometric settings. We compare our perturbation by diffusion method with other approaches of building exchangeable pairs and show that our perturbation scheme cooperates with the infinitesimal version of Stein's method harmoniously. More precisely, our exchangeable pairs satisfy a key condition in the infinitesimal Stein's method in general. Based on the exchangeable pairs, we are able to extend the approximate normality of eigenfunctions of Laplacian on a compact manifold to eigenfunctions of Witten Laplacian, which is of the form:$\Delta_w = \Delta - \nabla H$. We then apply our abstract theorem to recover a central limit result of linear statistics on sphere. Finally, we prove an an infinitesimal version of Stein's method for exponential distribution and combine it with our continuous family of exchangeable pairs to extend an approximate exponentiality result of $|\Tr U|^2$, where $\Tr U$ is the trace of the first power of a matrix $U$ sampled from the Haar measure of unitary group, to arbitrary power and its analog for general circular ensemble.
\end{abstract}

\section{Introduction} \label{intro}
In \cite{cs72}, Charles Stein introduced the celebrated Stein method along with the notion of exchangeable pairs for proving the rate of convergence for normal distribution. The main idea behind Stein's method is to replace the characteristic function typically used to show distributional convergence with a characterizing operator $\mathcal{A}$. This characterizing operator of the target distribution defines a differential equation, the Stein equation, which can be solved and has a probabilistic representation. Then the problem of  bounding the distance between two distributions is reduced to estimate the derivatives of Stein equation's solution. The metric we use to measure the distance is usually of the form:
$$d(\mu , \nu ) = \sup_{h\in \mathcal{H}} |  \mathbb{E} h(X) -  \mathbb{E}h(W)|\ ,$$
where $X$, $W$ are random variables with distribution $\mu$ and $\nu$ and $\mathcal{H}$ is a certain function class. We name a few distributions that have been tackled by Stein method: the Poisson distribution \cite{chen75,cdm05,hg}, the exponential distribution \cite{cdm05,cfr}, the Beta distribution \cite{beta} and manifold-valued measures \cite{cdm05}. In \cite{lam1}, the author proved a CLT for general linear statistics of the circular $\beta$-ensembles and the proof is based on the "loop equation" method which bears a resemblance with Stein's method. For a short survey, the reader can refer to \cite{cha}.

 In this article, we focus on building exchangeable pairs in order to apply (the infinitesimal) Stein method efficiently. Recall that a pair $(X,X')$ of random elements on a common probability space is exchangeable if
$$(X,X') \overset{d}{=} (X' , X)\ .$$
One standard way to construct an exchangeable pair is to go one step in a discrete time reversible Markov chain. This works well when the underlying problem is essentially discrete such as a random sequence with weak correlation. On the other hand, when there exists a continuous symmetry in the model, we can create a continuous family of exchangeable pairs by perturbing the random variable we want to study continuously In \cite{meckes-thesis}, the author developed an infinitesimal version of Stein's method which is coupled with a continuous family of exchangeable pairs. It was applies to several geometric settings like the sphere harmonics \cite{meckes-L} and the orthogonal group \cite{M07},\cite{wwr}. The procedure to produce the exchangeable pairs in \cite{M07,meckes-L,El2} was to perturb the original random variable $X$ along a deterministic flow such as geodesic flows. In \cite{Fj,Fj10,jn}, the authors constructed the continuous family of exchangeable pairs by the heat kernel. Once we have the general set-up, the next step boils down to control the moments of certain functionals of the target random variable. This task is  usually accomplished by combinatorial techniques.For instance, the so-called Weingarten calculus has been developed to calculate moments of a broad class of functions defined on the lie group. This kind of calculation also has a close relation with integrable probability, see \cite{al,bs}.

The main purpose of the present paper is to show how to create a continuous family of exchangeable pairs by diffusion process on manifold. When we set underlying process as Brownian motion, this method can be seen as a microscopic explanation of the heat kernel method in \cite{Fj10}. For example, the heat kernel perturbation was used in \cite{ja} to study linear eigenfunctions of  Laplacian operator on unitary group.  Besides covering the pure Laplacian case, our method also works for the Witten Laplacian by adding a drift generated by a gradient vector field $\nabla H$ to the Brownian motion, Where $H(x)$ is a weight function on the manifold. We would like to point out the technique in \cite{lam2}, which is quite different from our approach, can also be generalized to the Gibbs measures on manifolds. On the other hand, the diffusion perturbation scheme matches well with the infinitesimal Stein method. We will demonstrate this point by showing that a key condition we need to check to guarantee the approximate normality holds in general under our framework. We list the third condition of the  infinitesimal version of Stein's method (see \cite{El2}) as follows:
$$\lim_{t \rightarrow 0} \frac{1}{t} \E [ | W_t - W |^2  \mathbb{I}( | W_t - W |^2 > \rho ) ] = 0  \ .$$
We prove this by utilizing a concentration property of Brownian motion as $t \rightarrow 0$ and Girsanov transform. Note that this condition is usually verified by calculating the fourth moment, which is a heavy computational task even if there is an explicit formula available. Moreover, this condition is universal in the sense that it also emerges in infinitesimal Stein method for other probabilistic distributions. To show  how this universal condition appears, we convert the Stein method for exponential distribution in \cite{jn} into an infinitesimal version. Then we are able to study the distribution of $|\Tr (U^k)|^2$ for all $k > 0$, where $U$ is sampled by the Haar measure of unitary group. This is closely related with the central limit phenomenon of unitary matrices (see \cite{kg}).  Properties of the norm of $\Tr (U^k)$ plays a role in comparing the circular unitary ensemble and the Riemann zeta function (see \cite{fm,cd}), where a precise bound on the rate of convergence of  $|\Tr (U^k)|$ is crucial. Furthermore, the analog of the trace power functions can be defined for general circular ensemble which is a key ingredient for studying linear statistics of the corresponding ensemble. Although the geometry of the underlying space has transformed from the compact group to an open simplex, we can still adapt our method to this case. Note that for general circular ensemble, the corresponding diffusion has a drift, so we need to harness the full power of the abstract theorem developed in section 3.  Last but not least, our method may have the potential to generalize to metric measure space where the notion of diffusion can still be defined (see \cite{vjr,ks}).

The rest of the contents are organized as follows. We first introduce the notation in section 1.1. In section 2 and 3, the method of constructing exchangeable pairs by diffusion on manifold is given and then we prove a general theorem on the approximate normality of eigenfunctions of the Witten Laplacian, which is also known as the Bakery-Emery Laplacian (see \cite{xiang12,dim}). The calculation in the process of proving the theorem will be encountered again in the following sections. Section 3 recovers a result in \cite{meckes-thesis} by implementing the Brownian motion perturbation. The next section treats the norm of the trace of the powers of unitary matrix. We first prove an  infinitesimal approximation theorem on the basis of the estimates of the Stein equation built in \cite{jn}. Then we apply the theorem to prove the approximate exponentiality of $|\Tr (U^k)|^2$ for general $k > 0$, where $U$ is sampled by the Haar measure of unitary group. In the last section, we modify the general theorem in section 2 in order to handle Dyson Brownian motion, then we apply the theory to study the analog of  $|\Tr (U^k)|^2$ for general circular ensemble.
\subsection{Notation}
Let $(M,g)$ be an n-dimensional Riemannian manifold. We assume $M$ is compact if not particularly indicated. In local coordinates $(x^1, \dots, x^n)$ with tangent vectors $\{\frac{\partial}{\partial x_i} \}_{i=1}^{n}$, define
$$(G(x))_{ij} = g_{ij} (x) = g( \frac{\partial}{\partial x_i} , \frac{\partial}{\partial x_j}),\ \ \ \ g^{ij}(x) = (G^{-1} (x))_{ij}\ .$$
Denote the covariant derivative on $M$ as $\nabla$. Then for a smooth function $H(x)$ (the potential function) on $M$, the gradient vector field is defined as $g^{-1} \nabla H$. We will abuse the notation and denote the gradient vector field as $\nabla H$ as well. In local coordinates, the Christoffel symbols $\Gamma^{m}{}_{ij}$ are given by
\begin{equation}
\Gamma^{m}{}_{ij}={\frac{1}{2}}\,g^{mk}\left({\frac {\partial }{\partial x^{j}}}g_{ki}+{\frac {\partial }{\partial x^{i}}}g_{kj}-{\frac {\partial }{\partial x^{k}}}g_{ij}\right) \nonumber
\end{equation}
and the components of the Ricci curvature tensor are given by
\begin{equation}
Ric_{ij}={\frac {\partial \Gamma ^{\ell }{}_{ij}}{\partial x^{\ell }}}-{\frac {\partial \Gamma ^{\ell }{}_{i\ell }}{\partial x^{j}}}+\Gamma ^{m}{}_{ij}\Gamma ^{\ell }{}_{\ell m}-\Gamma ^{m}{}_{i\ell }\Gamma ^{\ell }{}_{jm}. \nonumber
\end{equation}
For a general tensor $T$ on $M$ or a certain domain specified by the context, we denote the norm of $T$ induced by the metric $g$ as $\norm{T}$ and the maximum of the norm on the domain as $$\norm{T}_{max}\ .$$\\
Let $ \Delta$ be the Laplacian operator with respect to g, in local coordinates, we have
$$\Delta f = \Tr \nabla \nabla f = \frac{1}{\sqrt{det(G)}} \frac{\partial}{\partial x_j} (\sqrt{det(G)} \cdot g^{jk} \frac{\partial f}{\partial x_k}) \ .$$
The Witten Laplacian with respect to $H(x)$ is defined by
$$\Delta_w : = \Delta - \nabla H\ .$$
We denote the orthonormal frame bundle of $M$ as $\mathcal O (M)$, which is the set of $(x, \phi^1,\dotsc,\phi^n)$, where $x \in M$ and $\phi^1,\dotsc,\phi^n$ forms an orthonormal basis for $T_x M$. Then the covariant derivative $ \nabla$ induces a connection 1-form $\omega$ on $\mathcal O (M)$. It's well known that the tangent bundle of $\mathcal O (M)$ splits by the connection:
$$T \mathcal O (M) = H T\mathcal O (M) + V T \mathcal O (M) \ ,$$
where we denote the vertical subbundle as $V T \mathcal O (M)$ and the horizontal subbundle, which is the kernel of $\omega$ as $H T\mathcal O (M)$.

On the space of real $n\times n$ matrices, 
the Hilbert-Schmidt inner
product and the corresponding norms  are defined by
$$\inprod{A}{B}_{H.S.}=\Tr(AB^T),\ \ \ \ \norm{A}_{H.S}=\sqrt{\Tr(AA^T)}\ .$$
The operator norm of a matrix $A$ over $\R$ is defined by
$$\norm{A}_{op}=\sup_{|v|=1,|w|=1}|\inprod{Av}{w}|.$$

Let X,Y be random variables, the Wasserstein distance $d_W (X,Y)$ is defined by
$$d_W (X,Y) = \sup_{Lip(g) \leq 1} | \mathbb{E} g(X) - \mathbb{E} g(Y)|\ ,$$
where $Lip(g) = \sup_{x \neq y} \frac{|\mathbb{E} g(X) - \mathbb{E} g(Y)|}{|x-y|}$ is the Lipschitz constant of $g$. 

The total variation distance between X and Y is defined by
$$d_{TV} (X,Y) = \frac{1}{2} \sup_{g} |\mathbb{E} g(X) - \mathbb{E} g(Y)|\ ,$$
where $g$  belongs to the class which consists of continuous functions which are bounded by 1.

The Kolmogorov distance $d_K (X,Y) $ is defined by
$$d_K (X,Y) = \sup_{s \in \mathbb{R}}| \mathbb{P}( X \leq s)- \mathbb{P}( Y \leq s)|\ .$$

\section{Building the exchangeable pair}
Suppose $X$ is a random point in $M$. To create an exchangeable pair, we run a stochastic process starting at $X$. The specific stochastic process  we will consider is defined by a stochastic differential equation on the orthonormal frame bundle $\mathcal O (M)$. We will briefly review the construction of stochastic differential equation on manifold, for the detail, the reader can see \cite{Hsu, fyw}. 

Let $\pi_*$ be the be the isomorphism $\pi_* :  H T\mathcal O (M) \rightarrow M$ induced by the canonical projection $\pi: \mathcal O (M) \rightarrow M$. Suppose we are given an element $\phi = (x, \phi^1,\dotsc,\phi^n)$ of $\mathcal O (M)$ and a vector $v \in T_x(M)$,  then the horizontal lift of $v$ at $\phi$ is the unique tangent vector $\widetilde{v} \in  H T\mathcal O (M)$ s.t 
$$\pi_* \widetilde{v} = v\ .$$
 Let $
\mathcal H^i ( \phi) = \widetilde{\phi^i} $ be the horizontal lift of the orthonormal basis of $T_x M$, then we can define the horizontal Laplacian as
$$\Delta_{\mathcal H} = \sum_{i=1}^n (\mathcal H^i)^2 \ .$$
It has the following relation with the Laplacian on $M$:
$$\Delta_{ \mathcal H } ( f \circ \pi ) = \Delta (f) \circ \pi \ .$$
The stochastic differential equation is of the form:
\begin{equation}
\left\{
\begin{array}{l}
d U_t = \sum_{i=1}^d \mathcal H^i ( U_t ) \circ d B^i_t  - \frac{1}{2}  \widetilde{\nabla H} (U_t) dt  ,   \\
\pi (U_0) = X. 
\end{array}
\right.
\end{equation}
The vector field $\widetilde{\nabla H}$ on orthonormal frame bundle is the horizontal lift of $\nabla H$ defined on $M$. We denote the projection of $U_t$ to $M$ as $X_t$, i.e $X_t = \pi (U_t)$. Since $M$ is compact, everything is smooth and bounded which is enough to insure the existence of a unique strong solution to (2.1).

The stationary distribution of this process is $d\mu = \frac{1}{Z} e^{-H(x)} dx$, where Z is the normalization factor to make $d \mu $ a probabilistic measure. Since $X_t$ is reversible with stationary distribution $\mu$, we know that $(X_t , X)$ has the same law as $(X , X_t )$ If we sample $X$ from the stationary distribution $\mu$. In this way, we have created a family of exchangeable pairs $(X_t , X)$ indexed by the time t. 

There is a correspondence between the solution of the heat equation of Witten Laplacian and the underlying process $X_t$. Consider the following initial value problem for $\theta = \theta ( t,x)$:
\[
\left\{
\begin{array}{l}
\frac{\partial \theta}{\partial t} = \frac{1}{2} \Delta_w \theta ,  \\
\theta (0,x)  = f(x) , \ \ \ x \in M. 
\end{array}
\right.
\]
Then we have a Feynman-Kac representation $$ \theta (t,x) = \mathbb{E} [ f ( X_t) | X_0 = x ] \ ,$$ for the detail, see Theorem 7.2.1 of \cite{Hsu}. On the other hand, from functional analysis, we know that the solution has a semigroup representation:
$$ \theta (t, x) = e^{t \Delta_w } f(x)\ , $$
where $e^{t \Delta_w}$ is defined as $e^{t \Delta_w} : = I + t \Delta_w + t^2 \frac{\Delta_w^2}{2!} + \cdots$. 

We will need the asymptotics of $\theta (t, x) = \mathbb{E} [ f ( X_t) | X_0 = x ]$ with respect to the time $t$. In \cite{ja} and \cite{Fj10}, this was down by means of the semigroup representation of heat equation. We give a probabilistic argument that also applies when the drift of the underlying process is singular. By Ito's formula,
$$\mathbb{E} [ f ( X_t) | X_0 = x ] = f(X_0) + \mathbb{E} [ \int_0^t \Delta_w f(X(s)) ds | X_0 = x ]\ . \eqno{(2.2)}$$
Since we assume $f$ is a smooth function on a compact manifold $M$, we can exchange the order of differentiation and expectation freely, 
$$\lim_{t \rightarrow 0}  \mathbb{E} [ f ( X_t) - f(X_0) | X_0 = x ] = \Delta_w f(	X_0)\ .  \eqno{(2.3)}$$

\begin{rem}\mbox{}
\begin{enumerate}
\item When we consider noncompact manifolds, we usual require a lower bound on the Bakry-Emery Ricci tensor associated with the Witten Laplacian:
$$ Ric_H : = Ric + \nabla^2 H\ ,$$
where $ \nabla^2 H$ is the Hessian two tensor of the potential function $H$. Otherwise, we may encounter analytical issues ( see\cite{fyw} ).
\item In \cite{meckes-L}, the way to create the exchangeable pair is by perturbing $X$ by a geodesic flow. In that setting, we have
\begin{equation}\nonumber
\begin{split}\label{taylor}
f(X_\epsilon)-f(X)&=\epsilon\cdot\left.\frac{d(f\circ\gamma)}{dt}\right|_{t=0}+
\frac{\epsilon^2}{2}\cdot\left.\frac{d^2(f\circ\gamma)}{dt^2}\right|_{t=0}
+O(\epsilon^3)\ .
\end{split}\end{equation}  
The first order term in $\epsilon$ is cancelled after taking expectation due to the symmetry. So the actual perturbing order in \cite{meckes-L} is $\epsilon^2$. However after adding the drift to the Laplacian, the symmetry is broken. So the original perturbation doesn't work in our case. By Ito's formula,
$$df(X_t) = \sum_{i=1}^d ( \mathcal H^i f ) d B^i_t + [\frac{1}{2} \Delta f - \frac{1}{2} (\nabla H f ) ] dt\ , \eqno{(2.4)}$$
 the Laplacian $\Delta$ and the drift $\nabla H$ scales in same order. As we will see in the next section, this is important if we want to apply the infinitesimal Stein method.
\end{enumerate}
\end{rem}

\section{Approximate normality of eigenfunctions of Witten Laplacian}
We need the following infinitesimal multivariate normal approximation theorem introduced in \cite{El2}:
\begin{thm}\label{abscont}Let $W$,$W_t$ be $ \mathbb{R}^d$ -valued random vector and  $(W,W_t)$ forms a family of 
exchangeable pairs defined on a common probability space.  Suppose there is an invertible matrix $\Lambda$, a positive definite symmetric  matrix $\Sigma$, a  random vector $E$, a random matrix $E'$ such that

\begin{enumerate}
\item $$\E\left[W_t -W\big|
W\right]=-t \Lambda W+t E + O(t^2),$$\label{lindiff3}
\item $$\E\left[(W_t-W) (W_t - w)^T\big|
W\right]=2 t \Lambda \Sigma  + t E' + O(t^2),$$\label{quaddiff3}
\item For each $ \rho > 0$,$$\lim_{t \rightarrow 0} \frac{1}{t} \E [ | W_t - W |^2  \mathbb{I}( | W_t - W |^2 > \rho ) ] = 0  $$\label{cubediff3}
\end{enumerate}
Then
$$d_{W}(W,\Sigma^{\frac{1}{2}}Z)\le \norm{\Lambda^{-1}}_{op}\left[\frac{1}{2} \norm{\Sigma^{-\frac{1}{2}}}_{op}
 \mathbb{E}\norm{E'}_{H.S}+ \mathbb{E}\left|E\right|\right],$$
where $Z$ is a standard Gaussian random vector in $\mathbb{R}^d$.
\end{thm}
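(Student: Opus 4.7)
The plan is to apply the generator (semigroup) variant of Stein's method. First, for a test function $h$ with $\mathrm{Lip}(h)\le 1$, I would solve the Gaussian Stein equation
$$\Tr\bigl(\Lambda\Sigma\,\nabla^2 f(w)\bigr) - w^T\Lambda^T \nabla f(w) = h(w) - \mathbb{E}h(\Sigma^{1/2}Z),$$
whose solution $f=f_h$ admits the representation $f(w) = -\int_0^\infty [P_s h(w) - \mathbb{E}h(\Sigma^{1/2}Z)]\,ds$ for the OU-type semigroup $P_s$ with generator $\mathcal{L}g(w) = \Tr(\Lambda\Sigma\,\nabla^2 g) - w^T\Lambda^T\nabla g$; invertibility of $\Lambda$ together with positivity of $\Sigma$ guarantees exponential ergodicity to $\mathcal{N}(0,\Sigma)$. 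Differentiating under the integral supplies the uniform bounds
$$\sup_w \|\nabla f(w)\|\le \|\Lambda^{-1}\|_{op}, \qquad \sup_w \|\nabla^2 f(w)\|_{op} \le \|\Lambda^{-1}\|_{op}\|\Sigma^{-1/2}\|_{op},$$
with $\nabla^2 f$ uniformly continuous (after a preliminary mollification of $h$ if needed, with the $L^\infty$ loss sent to zero at the end).

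Second, I would exploit the exchangeability of $(W,W_t)$, which forces $\mathbb{E}[f(W_t)-f(W)]=0$. Expanding $f$ to second order about $W$, taking conditional expectation given $W$, and substituting hypotheses 1 and 2 yields
$$0 = -t\,\mathbb{E}\bigl[(\Lambda W)^T \nabla f(W)\bigr] + t\,\mathbb{E}[E^T\nabla f(W)] + t\,\mathbb{E}\bigl[\Tr(\Lambda\Sigma\,\nabla^2 f(W))\bigr] + \tfrac{t}{2}\,\mathbb{E}\bigl[\Tr(E'\,\nabla^2 f(W))\bigr] + O(t^2) + \mathbb{E}[R_t],$$
where $R_t$ denotes the Taylor remainder. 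Dividing by $t$, letting $t\to 0$, and invoking the Stein equation to collapse the $\Lambda$-terms into $\mathbb{E}h(W)-\mathbb{E}h(\Sigma^{1/2}Z)$ produces the identity
$$\mathbb{E}h(W) - \mathbb{E}h(\Sigma^{1/2}Z) = -\mathbb{E}[E^T\nabla f(W)] - \tfrac{1}{2}\,\mathbb{E}\bigl[\Tr(E'\,\nabla^2 f(W))\bigr].$$
Cauchy--Schwarz together with the gradient and Hessian bounds on $f$ then delivers the asserted inequality.

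The principal obstacle is verifying $t^{-1}\mathbb{E}|R_t|\to 0$; this is precisely what hypothesis 3 is tailored to handle, in lieu of a third-derivative bound on $f$ (which is unavailable for merely Lipschitz $h$). Writing $R_t$ in mean-value form as $\tfrac{1}{2}(W_t-W)^T\bigl[\nabla^2 f(\xi)-\nabla^2 f(W)\bigr](W_t-W)$ with $\xi$ on the segment between $W$ and $W_t$, I would split the expectation at the threshold $\|W_t-W\|\le\rho$. On the event $\|W_t-W\|\le\rho$, uniform continuity of $\nabla^2 f$ yields $|R_t|\le \omega(\rho)\|W_t-W\|^2$, where $\omega$ is its modulus of continuity; by hypothesis 2 this contributes at most $\omega(\rho)\cdot O(t)$. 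On the complementary event $\|W_t-W\|>\rho$, the uniform bound $|R_t|\le 2\sup_w\|\nabla^2 f(w)\|_{op}\cdot\|W_t-W\|^2$ combined with hypothesis 3 shows the contribution is $o(t)$. Sending $t\to 0$ first and then $\rho\to 0$ (so $\omega(\rho)\to 0$) closes the argument.
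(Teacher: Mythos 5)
First, a point of reference: the paper does not prove this theorem at all --- it is quoted verbatim from \cite{El2} --- so the honest comparison is with Meckes' proof there and with the paper's own analogous argument for its exponential version (Theorem 5.3). The second half of your outline is fine and matches that template: the identity from exchangeability, insertion of conditions 1--2, and the treatment of the Taylor remainder by splitting at $\lvert W_t-W\rvert\le\rho$, using a modulus of continuity of $\nabla^2 f$ on the small event and condition 3 on the large one, is exactly the mechanism the paper uses in Section 5.

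The gap is in your first step, which is where every constant in the conclusion comes from. You build $\Lambda$ into the Stein operator $\Tr(\Lambda\Sigma\,\nabla^2 f)-\langle\Lambda w,\nabla f\rangle$ and assert that invertibility of $\Lambda$ plus positivity of $\Sigma$ yields an ergodic OU semigroup with $\sup_w\lVert\nabla f(w)\rVert\le\norm{\Lambda^{-1}}_{op}$ and $\sup_w\lVert\nabla^2 f(w)\rVert\le\norm{\Lambda^{-1}}_{op}\norm{\Sigma^{-1/2}}_{op}$. None of this follows from the stated hypotheses: an invertible $\Lambda$ need not have spectrum in the open right half-plane (take $\Lambda=-I$), so $f=-\int_0^\infty(P_sh-\E h)\,ds$ need not exist; the diffusion matrix $\tfrac12(\Lambda\Sigma+\Sigma\Lambda^T)$ need not be positive semidefinite, so $\mathcal L$ need not generate a diffusion at all; and even when the semigroup exists, the gradient bound with constant exactly $\norm{\Lambda^{-1}}_{op}$ requires control of $\int_0^\infty\norm{e^{-s\Lambda}}_{op}\,ds$, which equals $\norm{\Lambda^{-1}}_{op}$ for symmetric positive definite $\Lambda$ (the case used in this paper) but not for general invertible, or even general stable, $\Lambda$. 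The Hessian bound is likewise asserted, and with your twisted generator the smoothing is governed by $(\tfrac12(\Lambda\Sigma+\Sigma\Lambda^T))^{-1/2}$, not transparently by $\norm{\Lambda^{-1}}_{op}\norm{\Sigma^{-1/2}}_{op}$. The proof in \cite{El2} avoids all of this: one solves the $\Lambda$-free Stein equation $\Tr(\Sigma\,\nabla^2 f)-\langle w,\nabla f\rangle=h-\E h(\Sigma^{1/2}Z)$, for which dimension-free first- and second-derivative bounds are proved, and $\Lambda^{-1}$ enters only through the antisymmetric function $\langle\Lambda^{-1}(W_t-W),\nabla f(W_t)+\nabla f(W)\rangle$, whose expectation vanishes by exchangeability; the factor $\norm{\Lambda^{-1}}_{op}$ then comes out by Cauchy--Schwarz rather than from semigroup decay rates. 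A second, smaller issue: to pair the Hessian term with $\E\norm{E'}_{H.S}$ you need a uniform bound on $\norm{\nabla^2 f}_{H.S}$ (this is what \cite{El2} establishes, and it is dimension-free); your stated operator-norm bound only gives $\lvert\Tr(E'\nabla^2 f)\rvert\le\norm{E'}_{H.S}\sqrt d\,\norm{\nabla^2 f}_{op}$, which introduces an unwanted $\sqrt d$.
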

\begin{remark}
This approximation theorem is the version in \cite{El2}, which is different from \cite{meckes-L} in the third condition. In \cite{meckes-L}, the third condition is
$$ \frac{1}{t} \mathbb{E} |W_t-W|^3 = o(1)\ .$$
It's hard to bound the third moment directly by our perturbation method. On the other hand, due to the concentration of the Gaussian measure, $\mathbb{P} ( | W_t - W |^2 > \rho ) $ goes to zero exponentially in t, the third condition in  \cite{El2} holds as a trivial corollary.

The function $f: M \rightarrow \mathbb{R}$ is an eigenfunction of $\Delta_w$ with eigenvalue $-\nu$ if 
$$ \Delta_w f = \Delta f(x) - \frac{1}{2} \nabla H f(x) = - \nu f(x)\ .$$
We can always normalize $f(x)$ such that $\int_M f(x) d \mu (x) = 0$ and $\int_M f^2(x) d \mu (x) = 1$.
\end{remark}
Let $X$ be a random point of $M$ sampled from measure $d\mu$. Define the value distribution of $f$ with respect to $\mu$ as the distribution of the random variable $f(X)$. 
Consider a sequence of $L^2$-orthonormal (with respect to $d\mu (x)$) eigenfunctions of $\Delta_w $ with corresponding eigenvalues $- \nu_i$. Let $W$ be the random vector $(f_i(X))$. We construct $W_t$ as $(f_i(X_t))$, where $X_t$ is the stochastic process starting at $X$ defined previously. Then $( W , W_t )$ forms a family of exchangeable pairs parameterized by time t.

\begin{lem}
Let $W_t$, $W$ be defined as above. Then
$$\lim_{t \rightarrow 0} \frac{1}{t} \mathbb{E} [ W_t - W | W ] = - \nu_i f_i(X)\ .  \eqno{(3.1)}$$
\end{lem}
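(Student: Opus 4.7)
The plan is to combine the infinitesimal Feynman--Kac identity (2.3), the eigenvalue equation $\Delta_w f_i = -\nu_i f_i$, and the tower property of conditional expectation. The proof amounts to a short computation; the only point worth flagging is that (3.1) conditions on the vector $W$ rather than on the random point $X$.

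First I would observe that the $i$-th coordinate of $W_t - W$ is $f_i(X_t) - f_i(X)$. Applying (2.3) (or equivalently Ito's formula (2.4) followed by (2.2)) to the smooth function $f_i$ on the compact manifold $M$, I obtain
$$\frac{1}{t}\,\mathbb{E}\bigl[f_i(X_t) - f_i(X)\,\bigm|\, X_0 = X\bigr] \longrightarrow \Delta_w f_i(X) = -\nu_i f_i(X) \qquad (t\downarrow 0),$$
where the first equality uses the normalization of $\Delta_w$ fixed by (2.2) and the second uses the eigenvalue equation. Compactness of $M$ and smoothness of $f_i$ make the convergence uniform in the starting point, so the remainder is a uniform $o(t)$.

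Next I would invoke the tower property to change the conditioning from $X$ to $W$:
$$\mathbb{E}\bigl[f_i(X_t) - f_i(X)\,\bigm|\, W\bigr] = \mathbb{E}\Bigl[\mathbb{E}\bigl[f_i(X_t) - f_i(X)\,\bigm|\, X\bigr]\,\Bigm|\, W\Bigr].$$
Since $f_i(X)$ is literally the $i$-th coordinate of $W$, it is $\sigma(W)$-measurable, so the outer conditional expectation applied to $-\nu_i f_i(X)\cdot t + o(t)$ returns $-\nu_i f_i(X)\cdot t + o(t)$. Dividing by $t$ and sending $t\to 0$ yields (3.1).

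I do not expect a serious obstacle. The only technical point is the uniformity of the $o(t)$ remainder, which is immediate from smoothness of $f_i$ and compactness of $M$; this uniformity is what justifies interchanging $\lim_{t\to 0}\tfrac{1}{t}$ with the outer conditional expectation. Essentially, the lemma is a reformulation of the statement that $-\nu_i$ is the eigenvalue of the generator $\Delta_w$ on $f_i$, re-expressed in the exchangeable-pair notation required by Theorem \ref{abscont}.
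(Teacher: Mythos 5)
Your proposal is correct and takes essentially the same route as the paper: apply the short-time expansion (2.3), $\mathbb{E}[f_i(X_t)\mid X_0=X]=f_i(X)+t\,\Delta_w f_i(X)+O(t^2)$, and then the eigenvalue equation $\Delta_w f_i=-\nu_i f_i$. The tower-property step you add to pass from conditioning on $X$ to conditioning on $W$ is a minor detail the paper glosses over (its proof simply conditions on $X$), and it does not alter the argument.
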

\begin{proof} \ \ Applying (2.3), one has that
\nonumber
\begin{align} 
\lim_{t \rightarrow 0} \frac{1}{t} \mathbb{E} [ W_t - W | W ] &= \frac{1}{t} \{ \mathbb{E} [  f_i( X_t ) | X ] - f_i(X) \} \\
& = \lim_{t \rightarrow 0} \frac{1}{t} [ f_i(X) + t \Delta_w f_i(X) + O(t^2) - f_i(X)  ] \\
& = - \nu_i f_i(X)
\end{align}
\end{proof} 

This implies that condition 1 of Theorem 3.1 is satisfied if we take $\Lambda = diag (\nu_1, \cdots , \nu_k ) $ and $E = 0$. Then the operator norm of $\norm{\Lambda^{-1}}$ satisfies
$$\norm{\Lambda^{-1}}_{op} \leq \max_{1\le i\le k} (\mu_i^{-1})\ .$$
\begin{lem}
Let $\{ f_i(x) \}$ be any sequence of smooth functions on $M$ (not necessarily eigenfunctions).  Let $W$,$W_t$  be the random vector $(f_i(X))$ and  $(f_i(X_t))$ as above. Then
$$ \mathbb{E} [ (W_t - W)_i (W_t - W)_j | W ] = 2 t \langle \nabla f_i(X) , \nabla  f_j(X) \rangle + O(t^2)\ ,           \eqno{(3.2)}$$
where $\langle \cdot , \cdot \rangle$ is the inner product induced by the metric g.
\end{lem}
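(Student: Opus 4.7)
My plan is to reduce (3.2) to the scalar asymptotic (2.3) applied, in turn, to the three smooth functions $f_i$, $f_j$, and the pointwise product $f_i f_j$. The algebraic identity
$$(W_t-W)_i(W_t-W)_j = f_i(X_t)f_j(X_t) - f_i(X)f_j(X_t) - f_i(X_t)f_j(X) + f_i(X)f_j(X)$$
is the starting point: conditioning on $W$ (equivalently on $X$) and pulling the $X$-measurable factors $f_i(X)$ and $f_j(X)$ outside the conditional expectation turns the right-hand side into an algebraic combination of $\mathbb{E}[f_i(X_t)\mid X]$, $\mathbb{E}[f_j(X_t)\mid X]$, $\mathbb{E}[(f_if_j)(X_t)\mid X]$ and the constants $f_i(X), f_j(X)$.

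Compactness of $M$ together with smoothness of the $f_k$ makes $\Delta_w^2 f_k$ and $\Delta_w^2(f_i f_j)$ uniformly bounded, so iterating (2.2) once more sharpens (2.3) into the uniform second-order expansion
$$\mathbb{E}[h(X_t)\mid X] = h(X) + t\,\Delta_w h(X) + O(t^2)$$
for any smooth $h$. Substituting with $h=f_i,\,f_j,\,f_i f_j$, the constant-order terms cancel and the coefficient of $t$ collapses to
$$\Delta_w(f_i f_j)(X) - f_i(X)\Delta_w f_j(X) - f_j(X)\Delta_w f_i(X).$$
Invoking the Leibniz/carr\'e du champ identity
$$\Delta_w(f_i f_j) = f_i\,\Delta_w f_j + f_j\,\Delta_w f_i + 2\langle \nabla f_i,\nabla f_j\rangle,$$
which follows from the classical product rule for $\Delta$ on $M$ combined with the ordinary Leibniz rule for the first-order operator $\nabla H$ (so that the drift contributions cancel between the two sides), reduces the coefficient of $t$ to exactly $2\langle\nabla f_i,\nabla f_j\rangle(X)$, yielding (3.2).

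The only step requiring genuine care is that the $O(t^2)$ remainder must be uniform in the initial point $X$, since in the applications one ultimately takes expectation over $X\sim\mu$; this uniformity is automatic from compactness of $M$ and the smoothness of $f_i,f_j$, via the standard second-order Taylor bound for the Feynman--Kac representation (2.2). The approach is essentially forced: unlike the geodesic-perturbation setup of Remark 2.1(2), the diffusion perturbation is already of ``order one" at the level of first moments, so the quadratic cross-moment must be computed from (2.3) applied to quadratic test functions rather than from a Taylor expansion of $f\circ\gamma$; the carr\'e du champ identity is the natural bridge between the two.
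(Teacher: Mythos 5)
Your proposal is correct and follows essentially the same route as the paper's proof: expand the product $(W_t-W)_i(W_t-W)_j$, apply the first-order expansion $\mathbb{E}[h(X_t)\mid X]=h(X)+t\,\Delta_w h(X)+O(t^2)$ to $f_i$, $f_j$, and $f_if_j$, and cancel via the product rule $\Delta_w(f_if_j)=f_i\Delta_w f_j+f_j\Delta_w f_i+2\langle\nabla f_i,\nabla f_j\rangle$ (your version of this identity has the correct signs, whereas the paper's display contains a harmless sign typo). Your added remark on the uniformity of the $O(t^2)$ remainder in the starting point, via compactness, is a point the paper leaves implicit.
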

\begin{proof} \nonumber
\begin{align}
 \mathbb{E} [(W_t - W)_i (W_t - W)_j  | W ] &=  \mathbb{E} [ f_i(X_t)f_j(X_t) | W] -  W_i  \mathbb{E} [ f_j(X_t) | W] -  W_j  \mathbb{E} [ f_i(X_t) | W]+ f_i(X_t)f_j(X_t)  \\
& =f_i(X)f_j(X) + t\Delta_w [f_i(X)f_j(X)] - 2f_i(X)f_j(X) - t f_i(X)\Delta_w f_j(X) \\
&\ \ \ - tf_j(X)\Delta_w f_i(X) + f_i(X_t)f_j(X_t) + O(t^2) \\
& = t\Delta_w [f_i(X)f_j(X)]  - tf_i(X)\Delta_w f_j(X) - tf_j(X)\Delta_w f_i(X) + O(t^2) \ .
\end{align}
Plugging in the identity
$$\Delta_w [f_i(x)f_j(x)] = f_i(x)(\nabla H f_j(x) + \Delta f_j(x)) + f_j(x)(\nabla H f_i(x) + \Delta f_i(x)) + 2 \langle \nabla f_i(x) , \nabla  f_j(x) \rangle\ ,$$
we get the final expression
$$
 \mathbb{E} [(W_t - W)_i (W_t - W)_j  | W ]= 2 t \langle \nabla f_i(X) , \nabla  f_j(X) \rangle + O(t^2) \ .$$
\end{proof} 

So condition 2 of Theorem 3.1 is satisfied if we take  $E' = 2 \langle \nabla f_i(X) , \nabla  f_j(X) \rangle - 2\Lambda $ and $\Sigma$ to be the identity matrix. Although it's hard to estimate $\langle \nabla f_i(X) , \nabla  f_j(X) \rangle$ pointwise,  we can calculate the expectation:
\begin{align} \nonumber
\mathbb{E} \langle \nabla f_i(X) , \nabla  f_j(X) \rangle & = \sum_{k=1}^d \int_{M}\langle \nabla_k f_i(x) , \nabla_k f_j(x) \rangle d\mu \\ \nonumber
& = \sum_{k=1}^d \int_{M} \langle \nabla_k f_i(x) , \nabla_k f_j(x) \rangle e^{-H(x)} dx\ . \nonumber
\end{align}
Since $M$ has no boundary, after integrating by parts, we have
\begin{align} \nonumber
\mathbb{E} \langle \nabla f_i(x) , \nabla f_j(x) \rangle & = - \int_{M} \langle f_i(x) , \Delta f_j (x) \rangle d\mu + \int_{M} \langle f_i(x) , \nabla H f_j(x) \rangle d \mu \\ \nonumber
& = -  \int_{M} \langle f_i(x) , \Delta_w f_j(x) \rangle d \mu \\   \nonumber
& = \delta_{ij} \nu_j\ , \nonumber
\end{align}
thus,
$$\mathbb{E} \norm{E'}_{H.S} = 2  \mathbb{E} \sqrt{\sum_{i,j=1}^k [ \langle \nabla f_i(X) , \nabla  f_j(X) \rangle - \mathbb{E} \langle \nabla f_i(X) , \nabla  f_j(X) \rangle ]}\ .$$

To verify condition 3, we need proposition 5.1.4 of \cite{Hsu} on the exit time of Brownian motion on manifold. Let $\tau_r$ denotes the first exiting time of the ball $B(x , r)$ of the Brownian motion starting at x. We have the following property:
\begin{lem}
Let $i_M$ be the injectivity radius of M and $r < i_M$. Then $\exists$ a positive smooth function $C_r (x)$ s.t as $t \rightarrow 0$, 
$$ \mathbb{P}_x \{ \tau_r \leq t \} \sim \frac{C_r (x)}{t^{(d-2)/2}} e^{-r^2 / 2t}\ ,$$
uniform in M.
\end{lem}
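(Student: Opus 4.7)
The plan is to reduce the geometric statement to its Euclidean analogue via normal coordinates and a Girsanov comparison, where the corresponding asymptotic for standard Brownian motion is classical. More precisely, since $r<i_M$, exponential normal coordinates centered at $x$ give a diffeomorphism between $B(x,r)\subset M$ and the Euclidean ball $B(0,r)\subset \R^d$ in which radial distance is preserved, so that $\tau_r$ becomes the first exit time from $B(0,r)$ for the coordinate-expressed diffusion $\widetilde X_t$. In these coordinates, by Ito's formula together with $g_{ij}(0)=\delta_{ij}$ and the standard Christoffel symbol expansion, $\widetilde X_t$ is a Euclidean Brownian motion plus a drift $b(\widetilde X_t)\,dt$ whose components are smooth and uniformly bounded on $B(0,r)$, together with a metric distortion of order $O(|\widetilde X|^2)$ in the diffusion coefficient.

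Next, I would apply Girsanov's theorem up to the exit time $\tau_r$ to pass from the driven process to a standard Euclidean Brownian motion $B_t$ started at $0$. Since the drift $b$ and the diffusion-coefficient perturbation are bounded on the ball, the Radon-Nikodym derivative $Z_{t\wedge\tau_r}$ has all moments bounded uniformly as $t\to 0$, and one can factor
$$\mathbb{P}_x\{\tau_r\le t\} = \mathbb{E}\bigl[\,Z_{t\wedge\tau_r^0}\,\mathbb{I}(\tau_r^0\le t)\bigr] + \text{(lower-order correction)},$$
where $\tau_r^0$ is the Euclidean exit time. For standard Brownian motion in $\R^d$, a reflection/Bessel-process computation gives
$$\mathbb{P}_0\{\tau_r^0\le t\} \sim \frac{c_d\,r^{d-2}}{t^{(d-2)/2}}\,e^{-r^2/2t}$$
by Laplace's method applied to the Gaussian integral over $\{|y|\ge r\}$. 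Combining this with the uniform boundedness of $Z$ and evaluating the limiting expectation on the sphere $\partial B(0,r)$ (where the exponential weight concentrates as $t\to 0$) yields the claimed form with $C_r(x)$ a positive smooth function of $x$ depending on the metric and Christoffel data at $x$.

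The uniformity in $M$ follows from compactness: the injectivity radius is bounded below, the metric coefficients and Christoffel symbols in normal coordinates are uniformly controlled, and hence the bounds on $b$ and on the Girsanov density are uniform in the base point $x$. This gives the asymptotic with a uniform error, proving the lemma.

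The main obstacle is the Girsanov step: one must show that the drift correction contributes only to the constant $C_r(x)$ and not to the polynomial prefactor or the exponential rate. This requires a careful Laplace-type asymptotic on the path space --- concretely, showing that the Radon-Nikodym derivative evaluated at paths that exit the ball in time $t$ concentrates, as $t\to 0$, on straight-line-like paths from $x$ to $\partial B(x,r)$, so that the leading contribution of $\mathbb{E}[Z_{\tau_r^0}\mid \tau_r^0\le t]$ converges to a genuine smooth function of $x$. Everything else reduces to the Euclidean Gaussian asymptotic and to uniform estimates that follow from the compactness of $M$.
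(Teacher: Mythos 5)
First, note that the paper does not prove this lemma at all: it is quoted verbatim from Proposition 5.1.4 of Hsu's \emph{Stochastic Analysis on Manifolds}, and in the application (Lemma 3.5) only the Gaussian-type upper bound on $\mathbb{P}_x\{\tau_r\le t\}$ is actually used. So you are attempting a from-scratch proof of a cited result, which is fine in principle, but your argument has a genuine gap at its central step. In normal coordinates the coordinate process has generator $\tfrac12 g^{ij}(y)\partial_i\partial_j + b^i(y)\partial_i$ with $g^{ij}(y)=\delta^{ij}+O(|y|^2)$; Girsanov's theorem can only remove the drift $b$, not the perturbation of the diffusion matrix. Two diffusions with different diffusion coefficients have mutually singular laws (their quadratic variations differ), so the Radon--Nikodym derivative $Z_{t\wedge\tau_r}$ you invoke to compare with standard Euclidean Brownian motion simply does not exist, and "uniform boundedness of the diffusion-coefficient perturbation" cannot rescue it. This is not a cosmetic issue: the sharp exponent $r^2/2t$ and the polynomial prefactor $t^{-(d-2)/2}$ are exactly the quantities that a crude comparison would destroy, and your own closing paragraph concedes that the Laplace-type analysis on path space needed to control the change of measure is left open, together with the positivity and smoothness of $C_r(x)$.

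A workable route, closer to how such results are actually proved, is to use the radial process: by the Gauss lemma $|\nabla\rho|=1$ inside the injectivity radius, so Ito's formula gives $d\rho(X_t)=d\beta_t+\tfrac12\Delta\rho(X_t)\,dt$ with $\beta$ a genuine one-dimensional Brownian motion, and $\tau_r$ becomes the hitting time of level $r$ by this one-dimensional process. The Laplacian comparison theorem sandwiches $\Delta\rho$ between the corresponding quantities for constant-curvature model spaces, hence sandwiches $\mathbb{P}_x\{\tau_r\le t\}$ between hitting probabilities of Bessel-type processes, whose small-time asymptotics $\sim C\,t^{-(d-2)/2}e^{-r^2/2t}$ are classical (your Euclidean computation is essentially the flat case of this). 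Matching the upper and lower model-space constants as the curvature bounds are localized, or alternatively running a heat-kernel parametrix expansion, is what produces a genuine smooth $C_r(x)$ and the uniformity on compact $M$; the Girsanov step should only ever be asked to handle a bounded drift (as the paper does in Lemma 3.5), never the metric itself.
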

Once we have this key lemma on the exit time, we are ready to verify condition (3).
\begin{lem}
For every $ \rho > 0$,$$\lim_{t \rightarrow 0} \frac{1}{t} \E [ | W_t - W |^2  \mathbb{I}( | W_t - W |^2 > \rho ) ] = 0\ .  $$
\end{lem}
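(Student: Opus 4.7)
The plan is to exploit the smoothness of the $f_i$ to reduce the condition to an exit-time estimate for the diffusion $X_t$, then to compare $X_t$ with pure Brownian motion via Girsanov's theorem so that Lemma 3.4 can be applied. First, since $M$ is compact and each $f_i$ is smooth, the map $x \mapsto (f_1(x),\dotsc,f_k(x))$ is globally Lipschitz with some constant $L$ and uniformly bounded by some $K$, so $|W_t - W| \leq K$ almost surely and $|W_t-W|^2 \leq L^2\, d(X_t,X)^2$, where $d$ is the Riemannian distance on $M$. Setting $r := \min(\sqrt{\rho}/L,\, i_M/2) < i_M$, the inclusions $\{|W_t-W|^2 > \rho\} \subset \{d(X_t,X) > r\} \subset \{\tau_r \leq t\}$ (with $\tau_r$ the first exit time of $X_s$ from the geodesic ball $B(X,r)$) reduce the claim to showing $\sup_{x \in M}\mathbb{P}_x(\tau_r \leq t) = o(t)$ as $t \to 0$, since
$$\frac{1}{t}\mathbb{E}\bigl[|W_t-W|^2\, \mathbb{I}(|W_t-W|^2 > \rho)\bigr] \leq \frac{K^2}{t}\sup_{x \in M}\mathbb{P}_x(\tau_r \leq t).$$

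The next step is to absorb the drift $-\tfrac{1}{2}\widetilde{\nabla H}$ carried by $X_t$, since Lemma 3.4 is stated for Brownian motion. I would let $\mathbb{Q}_x$ denote the law of Brownian motion on $M$ starting at $x$ and $\mathbb{P}_x$ that of the drifted diffusion. Compactness of $M$ together with smoothness of $H$ forces the drift to be bounded, so Novikov's condition is immediate and the Radon--Nikodym density $Z_t = d\mathbb{P}_x/d\mathbb{Q}_x|_{\mathcal F_t}$ satisfies $\mathbb{E}_{\mathbb{Q}_x}[Z_t^2] \leq C$ uniformly in $x \in M$ and $t \in (0,1]$. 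Cauchy--Schwarz then yields
$$\mathbb{P}_x(\tau_r \leq t) = \mathbb{E}_{\mathbb{Q}_x}\bigl[Z_t\,\mathbb{I}(\tau_r \leq t)\bigr] \leq \sqrt{C}\,\bigl(\mathbb{Q}_x(\tau_r \leq t)\bigr)^{1/2},$$
and since $r < i_M$, Lemma 3.4 bounds the last expression by $C'\, t^{-(d-2)/4} e^{-r^2/4t}$ uniformly in $x$.

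Putting the estimates together, $\frac{1}{t}\mathbb{E}\bigl[|W_t-W|^2\, \mathbb{I}(|W_t-W|^2 > \rho)\bigr] = O\bigl(t^{-1-(d-2)/4}\, e^{-r^2/4t}\bigr)$, which vanishes as $t \to 0$ because exponential decay dominates any polynomial in $1/t$. This is also the heuristic reason that the present scheme is superior to a direct fourth-moment estimate: once Girsanov reduces to Brownian motion, concentration of the heat kernel does all the work for free. The main technical point I expect in carrying this out is verifying the two uniformity claims used above, namely the uniform $L^2$ bound on $Z_t$ and the uniform-in-basepoint form of Lemma 3.4; both follow from compactness of $M$ and smoothness of $H$, and this is precisely why the compactness hypothesis is essential here (cf.\ the remark in Section 2 about the noncompact case requiring a Bakry--Emery Ricci lower bound).
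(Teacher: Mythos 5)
Your proposal is correct and follows essentially the same route as the paper: bound $|W_t-W|$ via compactness and Lipschitz continuity of the $f_i$ to reduce the event to an exit-time event $\{\tau_r \leq t\}$, then remove the drift by Girsanov's theorem (Novikov's criterion plus a Cauchy--Schwarz bound on the Radon--Nikodym density) so that the exponential exit-time decay of Lemma 3.4 applies. The only difference is organizational --- the paper treats the driftless case first and then adapts it, while you perform the reduction once for the drifted diffusion --- which does not change the substance of the argument.
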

\begin{proof} 
For simplicity, we assume $k =1$.  For general random vector $W = \{f_i(X)\}_{i=1}^k$, the proof is almost the same. We first prove the theorem when there is no drift and extend the argument  by Girsanov transform.
\begin{enumerate}
    \item $\nabla H = 0$, $\Delta_w = \Delta\ .$\\
When $\nabla H = 0$, the underlying process is just Brownian motion. We can write the expectation in the following form:
$$ \lim_{t \rightarrow 0} \frac{1}{t} \mathbb{E} [ | W_t - W |^2  \mathbb{I}( | W_t - W |^2 > \rho ) ] = \lim_{t \rightarrow 0}\frac{1}{t} \mathbb{E} \mathbb{E} [ | W_t - W |^2  \mathbb{I}( | W_t - W |^2 > \rho ) | W]\ .$$
Since M is compact, we can find a constant $C > 0$ such that the first derivative of f is bounded by C. Then
$$| f(X_t) - f( X) | \leq C \cdot |X_t - X|\ .$$
Choose a radius $r$ such that $r \leq \frac{\rho}{C}$, then we have the following upper bound:
\begin{align} \nonumber
\lim_{t \rightarrow 0} \frac{1}{t} \mathbb{E} [ | W_t - W |^2  \mathbb{I}( | W_t - W |^2 > \rho ) ] & \leq \lim_{t \rightarrow 0}\frac{1}{t} \mathbb{E} \mathbb{E} [ (f(X_t) - f(X))^2 \mathbb{I}(  \tau_{\sqrt{\rho}} \leq t) | X] \\  \nonumber
& \leq \lim_{t \rightarrow 0} 4 \cdot \norm{f}^2_{max} \frac{C_r (x)}{t^{d/2}} e^{-r^2 / 2t}\\ \nonumber
& = 0\ ,  \nonumber
\end{align}
where we use lemma 3.4 in the penultimate step.
\item $\nabla H = 0$, $\Delta_w = \Delta + \nabla H\ .$\\
When $ H(x) \neq constant$, there is a nonzero drift term in the SDE:
$$d U_t = \sum_{i=1}^d \mathcal H^i ( U_t ) \circ d B^i_t  - \frac{1}{2} \widetilde{\nabla H} (U_t) dt \ .$$
By definition, the anti-development of $U_t$ can be written as $B_t + V_t$, where $V_t$ satisfies the following SDE:   
$$dU_t = \sum_{i=1}^d \mathcal H^i ( U_t ) \circ ( d B^i_t + \dot{V}(t) dt )\ .$$
Since the vector field $\widetilde{\nabla H(x)}$ is bounded, we know that the $|\dot{V(t)}|$ is also bounded and $V(t) = \int_0^t \dot{V_s} ds$ belongs to the Cameron-Martin space. Define $M_t$ as
$$M_t = \exp( - \int_0^t \dot{V_s} dB_s - \frac{1}{2} \int_0^t |\dot{V_s}|^2 ds ) \ .$$
Since $\mathbb{E} [ e^{\frac{1}{2} \int_0^t |\dot{U_s}|^2 ds} ] < \infty$, by Novikov's criterion,
$M_t$ is a uniformly integrable martingale and Girsanov's transform applies. We denote the original probabilistic measure as $P$ and define the new measure $Q$ as
$$dQ = M_t dP\ ,$$
then the $\mathbb{R}^d$- valued process $ B_t + V_t$ is a Brownian motion under measure $Q$. \\This implies that the development of $B_t + V_t$ , which is exactly $U_t$, becomes a Brownian motion on $\mathcal O (M)$ under the new measure $Q$. This shows that $X_t = \pi(U_t)$ is a Brownian motion on $M$. Let $\tau^X_r$ be the first exit time of the ball $B(x , r)$ of $X_t$ under the original measure $P$, then
$$ \mathbb{E} \{ \mathbb{I}( \tau^X_r \leq t ) \} = \mathbb{E}_Q \{ \mathbb{I}( \tau^X_r \leq t ) \frac{dP}{dQ} \}\ .$$
lemma 3.4 implies:
\begin{align} \nonumber
\mathbb{E}_Q \{ \mathbb{I}( \tau^X_r \leq t ) \frac{dP}{dQ} \} &= \mathbb{E}_Q \{ \mathbb{I}( \tau_r \leq t ) \frac{dP}{dQ} \}\\ \nonumber
& \leq \sqrt{\mathbb{E}_Q \{ \mathbb{I}( \tau_r \leq t ) \} \cdot \mathbb{E}_Q (\frac{dP}{dQ})^2}\ . \nonumber
\end{align}
Since $\mathbb{E}_Q (\frac{dP}{dQ})^2$ is bounded, we can find a $ C' > 0$ such that $\mathbb{E}_Q (\frac{dP}{dQ})^2 \leq C'$. Therefore,
$$\mathbb{E} \{ \mathbb{I}( \tau^X_r \leq t ) \} \leq C' \cdot \sqrt{\mathbb{P}( \tau_r \leq t)}\ .$$
This implies that $\mathbb{E} \{ \mathbb{I}( \tau^X_r \leq t ) \}$ also has the exponential decay as $t \rightarrow 0$ and the rest of the proof goes as in the Brownian motion case.
\end{enumerate}
\end{proof} 
Since we have checked all the conditions of Theorem 3.1, we get the following conclusion:

\begin{thm}\label{eigenfunctions}
Let $\{ f_i (x) \}$ be an orthonormal (with respect to $d\mu (x)$) sequence of  eigenfunctions of $\Delta_w $ with corresponding eigenvalues $- \nu_i$.  Let $X$ be a random point of $M$ sampled from measure $d\mu$. Then if $W$ is the random vector: $(f_1(X),
\ldots,f_k(X))$, 
$$d_W(W,Z)\le\left[\max_{1\le i\le k} (\mu_i^{-1})\right] \cdot
\mathbb{E} \sqrt{\sum_{i,j=1}^k [ \langle \nabla f_i(X) , \nabla  f_j(X) \rangle - \mathbb{E} \langle \nabla f_i(X) , \nabla  f_j(X) \rangle ]}\ .$$
\end{thm}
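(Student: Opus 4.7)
The plan is to apply Theorem \ref{abscont} directly to the exchangeable pair $(W,W_t)$ built by setting $W_t=(f_1(X_t),\dots,f_k(X_t))$, where $X_t$ is the diffusion associated with $\tfrac12\Delta_w$ started at $X\sim \mu$. Since $\mu$ is reversible for $X_t$, $(W,W_t)$ is exchangeable for every $t>0$, and the proof reduces to checking the three hypotheses of Theorem \ref{abscont} with carefully chosen parameters. Specifically, I would take $\Sigma=I_k$, $\Lambda=\mathrm{diag}(\nu_1,\dots,\nu_k)$, $E=0$, and $E'=2\bigl(\langle\nabla f_i(X),\nabla f_j(X)\rangle\bigr)_{ij}-2\Lambda$.

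Condition (1) follows from Lemma 3.2: the eigenfunction identity $\Delta_w f_i=-\nu_i f_i$ together with $(2.3)$ gives, componentwise,
$$\E[W_t-W\mid W] = -t\Lambda W + O(t^2),$$
matching the required form with $E=0$. Condition (2) follows from Lemma 3.3, which expresses the conditional second-moment matrix as $2t\langle\nabla f_i(X),\nabla f_j(X)\rangle+O(t^2)$; rewriting this as $2t\Lambda+tE'+O(t^2)$ is exactly the desired decomposition. The integration-by-parts calculation carried out just after Lemma 3.3 shows $\E\langle\nabla f_i(X),\nabla f_j(X)\rangle=\delta_{ij}\nu_j$, so $E'$ is automatically centred and its Hilbert--Schmidt norm equals the quantity appearing inside the final expectation of the theorem.

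The main obstacle is condition (3), handled by Lemma 3.5. The key idea is that on the compact manifold $M$ each $f_i$ is Lipschitz, so $|W_t-W|^2>\rho$ forces the geodesic displacement $d(X_t,X)$ to exceed a fixed $r=r(\rho)$. Lemma 3.4 then gives an exit-time bound $\mathbb{P}_x\{\tau_r\le t\}\lesssim t^{-(d-2)/2}e^{-r^2/2t}$ for pure Brownian motion, which decays faster than any polynomial in $t$; thus $\tfrac1t\E[|W_t-W|^2\mathbb{I}(|W_t-W|^2>\rho)]\to 0$. The drift case is reduced to the Brownian case via Girsanov's theorem: because $\widetilde{\nabla H}$ is bounded on the compact $\mathcal O(M)$, Novikov's criterion applies, and the Radon--Nikodym derivative $dP/dQ$ has finite second moment, so a Cauchy--Schwarz step transfers the exponential exit-time decay from the $Q$-Brownian motion back to $X_t$ under $P$.

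Once the three conditions are verified, the conclusion follows by substituting into the bound of Theorem \ref{abscont}: $\|\Lambda^{-1}\|_{op}\le\max_i\nu_i^{-1}$, $\|\Sigma^{-1/2}\|_{op}=1$, and $\E|E|=0$, yielding
$$d_W(W,Z)\le \Bigl[\max_{1\le i\le k}\nu_i^{-1}\Bigr]\cdot\E\sqrt{\sum_{i,j=1}^k\bigl[\langle\nabla f_i(X),\nabla f_j(X)\rangle-\E\langle\nabla f_i(X),\nabla f_j(X)\rangle\bigr]}.$$
No further computation is needed; the content of the theorem is a synthesis of Lemmas 3.2--3.5 through the abstract framework of Theorem \ref{abscont}.
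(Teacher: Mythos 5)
Your proposal is correct and follows essentially the same route as the paper: the same choices $\Lambda=\mathrm{diag}(\nu_1,\dots,\nu_k)$, $\Sigma=I$, $E=0$, $E'=2\bigl(\langle\nabla f_i(X),\nabla f_j(X)\rangle\bigr)_{ij}-2\Lambda$, with conditions (1)--(3) of Theorem 3.1 verified exactly via Lemmas 3.2, 3.3, and 3.5 (the latter resting on the exit-time estimate of Lemma 3.4 plus Girsanov for the drift case). The final substitution into the abstract bound, using $\|\Lambda^{-1}\|_{op}\le\max_i\nu_i^{-1}$ and the integration-by-parts identity $\mathbb{E}\langle\nabla f_i(X),\nabla f_j(X)\rangle=\delta_{ij}\nu_j$, is the same synthesis the paper performs.
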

\section{Linear spherical symmetric statistics}
It requires further information to bound the $\mathbb{E} | \norm{\nabla f(X)}^2 - \mathbb{E} \norm{\nabla f(X)}^2 |$ term to get a central limit theorem out of Theorem 6. In this section, we turn from a general manifold to specific models where we can do precise calculation. As a warm-up example, we recover a corollary in \cite{meckes-thesis}. For one dimensional normal distribution, the stein equation is a first order differential equation:
$$f'(x) - xf(x) = g(x) - \mathbb{E}g(Z)\ .$$
Since the Stein operator is first order for the univariate normal distribution, the corresponding metric is the total variation distance. In \cite{meckes-thesis}, the author proved the following abstract approximation theorem:
\begin{thm}\label{abscont}Suppose that $(W,W_t)$ is a family of 
exchangeable pairs defined on a common probability space with 
$\E W=0$ and $\E W^2=1$.  Suppose there is a random variables $E=
E(W)$ and  a constant $\lambda$
such that 
\begin{enumerate}
\item $$\E\left[W_t -W\big|
W\right]=-\lambda t W+O(t^2),$$\label{lindiff3}
\item $$\E\left[(W_t-W)^2\big|
W\right]=2\lambda t +E + O(t^2),$$\label{quaddiff3}
\item For each $ \rho > 0$,$$\lim_{t \rightarrow 0} \frac{1}{t} \E [ | W_t - W |^2  \mathbb{I}( | W_t - W |^2 > \rho ) ] = 0 \ . $$\label{cubediff3}
\end{enumerate}
Then
$$d_{TV}(W,Z)\le \frac{1}{\lambda} \mathbb{E} \left|E\right|\ ,$$
where $Z\sim N(0,1).$ 
\end{thm}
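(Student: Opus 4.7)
The plan is to adapt the argument for Theorem 3.1 to the univariate total variation setting. Since the one-dimensional Stein operator $\mathcal{A}f(x)=f'(x)-xf(x)$ is first order, only bounds on $f$ and $f'$ (and not on $f''$) are required. Fix a continuous test function $g$ with $|g|\le 1$, let $f$ solve the Stein equation $f'(x)-xf(x)=g(x)-\E g(Z)$, and invoke the classical estimates $\|f\|_\infty\le\sqrt{\pi/2}$ and $\|f'\|_\infty\le 2$. The target is to bound $|\E g(W)-\E g(Z)|=|\E[f'(W)-Wf(W)]|$.

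The first step exploits the exchangeability of $(W,W_t)$: the map $(w,w')\mapsto(w'-w)(f(w')+f(w))$ is antisymmetric under swapping its arguments, so its expectation vanishes, yielding the identity
$$2\,\E[(W_t-W)f(W)]=-\E[(W_t-W)(f(W_t)-f(W))].$$
Condition~1 evaluates the left side as $-2\lambda t\,\E[Wf(W)]+O(t^2)$. Expanding $f(W_t)-f(W)=(W_t-W)\int_0^1 f'(W+s(W_t-W))\,ds$ splits the right side into a principal term $\E[(W_t-W)^2 f'(W)]$ and a remainder $\E[(W_t-W)^2 R_t]$, where $R_t=\int_0^1 [f'(W+s(W_t-W))-f'(W)]\,ds$. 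Condition~2 then gives
$$\E[(W_t-W)^2 f'(W)]=2\lambda t\,\E[f'(W)]+t\,\E[E\,f'(W)]+O(t^2).$$

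The crucial remaining step is to show $\E[(W_t-W)^2 R_t]=o(t)$. For any $\rho>0$, split according to whether $|W_t-W|^2$ exceeds $\rho$. On that event, bound $|R_t|\le 2\|f'\|_\infty$ and apply condition~3 to make the contribution $o(t)$. On its complement, bound $|R_t|\le\omega_{f'}(\sqrt{\rho})$, where $\omega_{f'}$ is the modulus of continuity of $f'$, and use condition~2 to obtain an $O(\omega_{f'}(\sqrt{\rho})\,t)$ contribution. Together these give $\limsup_{t\to 0}\frac{1}{t}|\E[(W_t-W)^2 R_t]|\le 2\lambda\,\omega_{f'}(\sqrt{\rho})$, which tends to $0$ as $\rho\to 0$ by the uniform continuity of $f'$.

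Assembling the pieces and letting $t\to 0$ yields the exact identity $\E[f'(W)-Wf(W)]=-\frac{1}{2\lambda}\E[E\,f'(W)]$, whence $|\E g(W)-\E g(Z)|\le\frac{\|f'\|_\infty}{2\lambda}\E|E|$. Taking the supremum over $|g|\le 1$ and combining with $\|f'\|_\infty\le 2$ yields the stated total variation bound. The main obstacle is the Taylor-remainder step: unlike in the Wasserstein setting, no Lipschitz estimate on $f'$ is available, so condition~3 is precisely the hypothesis that decouples the large- and small-jump regimes of $W_t-W$; uniform continuity of $f'$, secured if necessary by smoothing $g$ via convolution prior to taking the supremum (legal by density), then closes the estimate. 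This is why condition~3 is the natural infinitesimal replacement for the cubic bound $\frac{1}{t}\E|W_t-W|^3=o(1)$ used in \cite{meckes-L}.
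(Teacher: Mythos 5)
Your argument is correct (with condition~2 read as $2\lambda t + tE + O(t^2)$, which is what the application in Section~4 requires) and it is essentially the approach of the paper: Theorem~4.1 itself is only quoted from Meckes' thesis, but the in-paper proof of the analogous exponential result (Theorem~5.3) runs exactly your scheme --- the antisymmetry identity $\E[(W_t-W)(f(W_t)+f(W))]=0$, conditions~1 and~2 for the linear and quadratic terms, and truncation at $|W_t-W|^2>\rho$ together with condition~3 to kill the Taylor remainder. Your only deviation is cosmetic: you control the remainder via the modulus of continuity of $f'$ (after mollifying $g$), whereas the paper bounds it by $K|W_t-W|^3$ with $K$ depending on $f$ (i.e.\ on a second-derivative bound available for smooth test functions); in both cases the constant disappears in the $t\to 0$, $\rho\to 0$ limit, and your mollification step is legitimately justified by dominated convergence since the final bound does not involve the smoothing parameter. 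The one place to tighten is the constant bookkeeping at the end: with the paper's convention $d_{TV}=\frac{1}{2}\sup_{|g|\le 1}|\E g(W)-\E g(Z)|$, the classical estimate gives $\norm{f'}\le 2\norm{g-\E g(Z)}\le 4$, and the factor $\frac{1}{2}$ then returns exactly the stated bound $\lambda^{-1}\E|E|$.
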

We restate a corollary of  Theorem 2.2 in \cite{meckes-thesis}  with a slightly better constant:
\begin{coro}
Let $X$ be a uniform random point on the scaled n dimensional sphere $\sqrt{n} S^{n-1}$, the sphere in $\mathbb{R}^n$ of radius $\sqrt{n}$. For $\theta \in S^{n-1}$, define $W = \inprod {X}{\theta}$. Then
$$d_{TV}(W,Z)\le  \frac{2\sqrt{2}}{\sqrt{(n-1)(n+2)}}\ ,$$
where Z is standard Gaussian $N(0,1)$.
\end{coro}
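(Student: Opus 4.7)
The plan is to apply Theorem 4.1 with $f(x) = \langle x,\theta\rangle$ regarded as a function on the compact Riemannian manifold $M = \sqrt{n}\,S^{n-1}$ equipped with its uniform probability measure; in this case $H\equiv 0$, the Witten Laplacian coincides with $\Delta$, and the diffusion in (2.1) is just Brownian motion on the scaled sphere. Since $\mathbb{E}W = 0$ by the antipodal symmetry of the uniform measure, and $\mathbb{E}W^2 = n\cdot \mathbb{E}\langle Y,\theta\rangle^2 = 1$ for $Y$ uniform on $S^{n-1}$, the normalization hypotheses of Theorem 4.1 are automatic.

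Next I would identify the parameters. The function $y\mapsto\langle y,\theta\rangle$ is a first-order spherical harmonic with $\Delta_{S^{n-1}}$-eigenvalue $-(n-1)$; rescaling by $\sqrt{n}$ multiplies the Laplacian by $1/n$, so $\Delta f = -\tfrac{n-1}{n}f$. Therefore Lemma 3.2 supplies hypothesis 1 of Theorem 4.1 with $\lambda = \nu = (n-1)/n$, and Lemma 3.5 (item 1, the drift-free case) supplies hypothesis 3. For hypothesis 2 I would compute the intrinsic gradient by projecting the ambient vector $\theta$ onto $T_x(\sqrt{n}\,S^{n-1})$:
$$\nabla f(x) = \theta - \frac{1}{n}\langle x,\theta\rangle\, x, \qquad \|\nabla f(X)\|^2 = 1 - \frac{W^2}{n}.$$
Lemma 3.3 then produces the error term
$$E = 2\bigl(\|\nabla f(X)\|^2 - \mathbb{E}\|\nabla f(X)\|^2\bigr) = \frac{2}{n}\bigl(1 - W^2\bigr),$$
where $\mathbb{E}\|\nabla f(X)\|^2 = 1-1/n = \nu$ is consistent with the integration-by-parts identity appearing after Lemma 3.3.

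With all three hypotheses verified, Theorem 4.1 followed by Cauchy--Schwarz yields
$$d_{TV}(W,Z) \le \frac{1}{\lambda}\mathbb{E}|E| = \frac{2}{n-1}\mathbb{E}|1-W^2| \le \frac{2}{n-1}\sqrt{\mathbb{E}W^4 - 1}.$$
To finish I would use the classical fact that $\langle Y,\theta\rangle^2$ is $\mathrm{Beta}(1/2,(n-1)/2)$ distributed for $Y$ uniform on $S^{n-1}$, which yields $\mathbb{E}W^4 = n^2\cdot\frac{3}{n(n+2)} = \frac{3n}{n+2}$ and hence $\mathbb{E}W^4 - 1 = \frac{2(n-1)}{n+2}$. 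Substituting produces exactly the stated bound $\frac{2\sqrt{2}}{\sqrt{(n-1)(n+2)}}$.

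There is no substantive obstacle here --- every quantity is explicit on the round sphere. The slight improvement over \cite{meckes-thesis} is bought by two cheap choices: replacing a third-moment bound by Cauchy--Schwarz applied to $|E|$, and using the \emph{exact} fourth-moment identity rather than an asymptotic estimate. The diffusion-based exchangeable pair automatically delivers an error $E$ that is already centered, so no additional constant-order slack enters the final bound.
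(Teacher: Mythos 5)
Your proposal is correct and follows essentially the same route as the paper: the Brownian-motion exchangeable pair on $\sqrt{n}\,S^{n-1}$, Theorem 4.1 with $\lambda=(n-1)/n$, the tangential-projection identity $\|\nabla f(X)\|^2 = 1 - W^2/n$, and a Cauchy--Schwarz bound on $\mathbb{E}|E|$ combined with the exact fourth moment $\mathbb{E}W^4 = 3n/(n+2)$ (the paper gets this from its Proposition 4.3 rather than the Beta law, which is the same fact). Your explicit tracking of the factor $2$ in $E$ is in fact slightly cleaner than the paper's bookkeeping, which absorbs it into the constant $\tfrac{2n}{n-1}$ at the end.
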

\begin{remark}
By spherical symmetry, it suffices to prove for the case $\theta = e_1$, where $e_1$ is the direction vector of the first coordinate. In the original paper \cite{meckes-thesis},  the exchangeable pair was created by rotating $W$  in a random two-dimensional subspace through a deterministic angle $\sin^{-1} (\epsilon)$. \\
\end{remark}
Let $X_t$ be the Brownian motion on sphere starting at $X$. Since $X$ is sampled by the uniform measure on the sphere, which is the stationary distribution of the Brownian motion. Denote $W_t$, $W$ as the first coordinate component of $X_t$ and $X$. Then it's easy to verify that $( W_t, W)$ forms a family of exchangeable pairs. The corresponding generator is $\Delta_{\sqrt{n} S^{n-1}}$ with no drift term. By lemma 3.2, we have
\begin{align} \nonumber
\mathbb{E} [ W_t - W | W] & = \Delta_{\sqrt{n} S^{n-1}} W + O( t^2)\\  \nonumber
& = \frac{1}{n} \Delta_{S^{n-1}} W + O(t^2) \\ \nonumber
& =  \frac{1}{n} r^2 [ \Delta_{\mathbb{R}^n} W - \frac{\partial^2}{\partial^2 r } W - \frac{n - 1}{r} \frac{\partial}{\partial r} W ]\\ \nonumber
& = - \frac{n-1}{n} W\ . \nonumber
\end{align}
By lemma 3.3, we have
$$\mathbb{E} [ (W_t - W)^2 | W ] = t \norm{\nabla X_1}^2 + O( t^2)\ .$$
Let $E =  \norm{\nabla X_1}^2 - \mathbb{E} \norm{\nabla X_1}^2 $, then the only thing left is to bound the variance of $E$. We need the following proposition for integrating polynomials over spheres.
\begin{prop}
Let $P(x)=|x_1|^{\alpha_1}$.  
Then if $X$ is uniformly distributed on $\sqrt{n} S^{n-1}$, 
$$\mathbb{E}\big[P(X)\big]=\frac{\Gamma(\beta_1)\Gamma(\frac{n}{2}) n^{\frac{1}{2} \alpha_1} (\Gamma(\frac{1}{2})^{n-1}}{\Gamma(\beta_1+\frac{n-1}{2})\pi^{n/2}},$$
where $\beta_1=\frac{1}{2}(\alpha_1+1)$ and 
$$\Gamma(t)=\int_0^\infty s^{t-1}e^{-s}ds=2\int_0^\infty r^{2t-1}e^{-r^2}dr.$$
When $\alpha_1 = 4$, we have
$$\mathbb{E}\big[x_1^4] = \frac{3n}{n+2}\ .$$
\end{prop}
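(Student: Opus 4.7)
The cleanest route is to use the Gaussian representation of the uniform measure on the sphere. If $Y=(Y_1,\dots,Y_n)$ is a standard Gaussian vector in $\mathbb{R}^n$, then $U:=Y/|Y|$ is uniformly distributed on $S^{n-1}$ and is independent of $|Y|$. Scaling by $\sqrt{n}$ yields a random point $X=\sqrt{n}\,U$ uniform on $\sqrt{n}\,S^{n-1}$, and the independence gives the factorization
$$\mathbb{E}\,|Y_1|^{\alpha_1}=\mathbb{E}\,|Y|^{\alpha_1}\cdot\mathbb{E}\,|U_1|^{\alpha_1},$$
so that
$$\mathbb{E}\,|X_1|^{\alpha_1}=n^{\alpha_1/2}\,\frac{\mathbb{E}\,|Y_1|^{\alpha_1}}{\mathbb{E}\,|Y|^{\alpha_1}}.$$

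The two one-dimensional moments are standard Gamma-function integrals. For the numerator,
$$\mathbb{E}\,|Y_1|^{\alpha_1}=\frac{2}{\sqrt{2\pi}}\int_{0}^{\infty}y^{\alpha_1}e^{-y^2/2}\,dy=\frac{2^{\alpha_1/2}\,\Gamma(\beta_1)}{\sqrt{\pi}},\qquad \beta_1=\tfrac{\alpha_1+1}{2}.$$
For the denominator, $|Y|^2$ is $\chi^2_n$-distributed, so
$$\mathbb{E}\,|Y|^{\alpha_1}=\mathbb{E}\bigl[(\chi^2_n)^{\alpha_1/2}\bigr]=\frac{2^{\alpha_1/2}\,\Gamma(\tfrac{n+\alpha_1}{2})}{\Gamma(n/2)}.$$
Taking the ratio and using $\beta_1+\tfrac{n-1}{2}=\tfrac{n+\alpha_1}{2}$ produces
$$\mathbb{E}\,|X_1|^{\alpha_1}=\frac{n^{\alpha_1/2}\,\Gamma(\beta_1)\,\Gamma(n/2)}{\sqrt{\pi}\,\Gamma(\beta_1+\tfrac{n-1}{2})},$$
which matches the displayed formula once one replaces $\sqrt{\pi}^{\,-1}$ by $(\Gamma(1/2))^{n-1}/\pi^{n/2}$, since $\Gamma(1/2)=\sqrt{\pi}$.

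For the second assertion, set $\alpha_1=4$, so $\beta_1=5/2$. Using $\Gamma(5/2)=\tfrac{3}{4}\sqrt{\pi}$ and the shift identity $\Gamma(\tfrac{n+4}{2})=\tfrac{n+2}{2}\cdot\tfrac{n}{2}\,\Gamma(n/2)$, the $\Gamma(n/2)$ and $\sqrt{\pi}$ factors cancel and one is left with $n^{2}\cdot\tfrac{3/4}{n(n+2)/4}=\tfrac{3n}{n+2}$.

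There is no genuine obstacle here: the whole argument is a bookkeeping exercise in Gamma-function identities once the Gaussian decomposition is invoked. An alternative but equivalent route would be to introduce the polar angle $\phi$ from $e_1$, write the surface integral as $\int_0^\pi|\cos\phi|^{\alpha_1}\sin^{n-2}\phi\,d\phi$, and evaluate it via the Beta function; this gives the same Gamma expression after normalizing by the surface area $|S^{n-1}|=2\pi^{n/2}/\Gamma(n/2)$, which is the source of the $(\Gamma(1/2))^{n-1}/\pi^{n/2}$ factor actually appearing in the stated formula.
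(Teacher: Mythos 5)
Your proposal is correct. The paper states this proposition without any proof (it is the standard sphere-moment formula, quoted from the literature in the spirit of Meckes' thesis and Folland's formula for integrating monomials over spheres), so there is no argument of the paper's to compare against; your derivation supplies one. The Gaussian representation is sound: writing $X=\sqrt{n}\,Y/|Y|$ with $Y$ standard Gaussian, using independence of $Y/|Y|$ and $|Y|$ to get $\mathbb{E}|X_1|^{\alpha_1}=n^{\alpha_1/2}\,\mathbb{E}|Y_1|^{\alpha_1}/\mathbb{E}|Y|^{\alpha_1}$, and evaluating the two moments via $\mathbb{E}|Y_1|^{\alpha_1}=2^{\alpha_1/2}\Gamma(\beta_1)/\sqrt{\pi}$ and the $\chi^2_n$ moment $2^{\alpha_1/2}\Gamma\bigl(\tfrac{n+\alpha_1}{2}\bigr)/\Gamma(n/2)$ gives exactly the stated expression, since $\beta_1+\tfrac{n-1}{2}=\tfrac{n+\alpha_1}{2}$ and $(\Gamma(1/2))^{n-1}/\pi^{n/2}=\pi^{-1/2}$; the $\alpha_1=4$ specialization to $3n/(n+2)$ is also right. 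Your closing remark about the polar-angle/Beta-function computation is in fact the route that explains why the constant is written in the paper's peculiar form: in the general Folland-type formula each of the $n-1$ coordinates appearing with exponent zero contributes a factor $\Gamma(1/2)$, which is the origin of the $(\Gamma(1/2))^{n-1}/\pi^{n/2}$ factor (the paper's display has an unbalanced parenthesis there; your simplification to $1/\sqrt{\pi}$ is the correct reading). Either route is a short Gamma-function bookkeeping exercise, and yours is perfectly adequate for the use the paper makes of the proposition in Lemma 4.4.
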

Now we are ready to calculate the variance:
\begin{lem}Let $E$ be defined as above. Then
$$Var [E]= \frac{2(n-1)}{n^2 (n+2)} \ .$$
\end{lem}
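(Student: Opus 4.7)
The plan is to reduce the computation of $\mathrm{Var}[E]$ to a moment calculation on the sphere by first obtaining an explicit formula for $\|\nabla X_1\|^2$ at a point of $\sqrt{n}S^{n-1}$, and then applying the given moment proposition.

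The first step is to compute $\nabla X_1$ intrinsically on the sphere. Viewing $X_1$ as the restriction of the linear function $\langle e_1,\cdot\rangle$ to $\sqrt{n}S^{n-1}$, its gradient at a point $X$ is the projection of $e_1$ onto the tangent space $T_X(\sqrt{n}S^{n-1})$, namely
\[
\nabla X_1 = e_1 - \frac{\langle e_1, X\rangle}{|X|^2}X = e_1 - \frac{X_1}{n}X.
\]
Using $|X|^2=n$ this gives
\[
\|\nabla X_1\|^2 = 1 - 2\frac{X_1^2}{n} + \frac{X_1^2}{n^2}\cdot n = 1 - \frac{X_1^2}{n}.
\]

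The second step is to take expectations. By rotational symmetry $\mathbb{E} X_i^2 = \mathbb{E} X_1^2$ for all $i$, so summing and using $|X|^2=n$ yields $\mathbb{E} X_1^2 = 1$, and hence
\[
E \;=\; \|\nabla X_1\|^2 - \mathbb{E}\|\nabla X_1\|^2 \;=\; -\frac{1}{n}\bigl(X_1^2 - 1\bigr).
\]
Consequently $\mathrm{Var}[E] = \tfrac{1}{n^2}\bigl(\mathbb{E} X_1^4 - 1\bigr)$.

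The third step is to invoke the moment formula at the end of Proposition~4.3 with $\alpha_1=4$, giving $\mathbb{E} X_1^4 = 3n/(n+2)$. Substituting,
\[
\mathrm{Var}[E] \;=\; \frac{1}{n^2}\left(\frac{3n}{n+2}-1\right) \;=\; \frac{1}{n^2}\cdot\frac{2(n-1)}{n+2} \;=\; \frac{2(n-1)}{n^2(n+2)},
\]
as claimed. There is no real obstacle here; the only non-routine bit is recognizing that the gradient of a linear function restricted to the sphere has squared norm that is exactly affine in $X_1^2$, which collapses the variance of $\|\nabla X_1\|^2$ to that of $X_1^2$ and reduces everything to the fourth-moment computation already supplied by the proposition.
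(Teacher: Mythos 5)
Your proposal is correct and follows essentially the same route as the paper: both compute $\|\nabla X_1\|^2 = 1 - X_1^2/n$ via projection of the ambient gradient onto the tangent space and then invoke the spherical moment formula $\mathbb{E}X_1^4 = 3n/(n+2)$. The only difference is cosmetic — you write $E = -\tfrac{1}{n}(X_1^2-1)$ and take the variance of $X_1^2$ directly, whereas the paper expands $\mathbb{E}\|\nabla X_1\|^4$ and subtracts $(\mathbb{E}\|\nabla X_1\|^2)^2$; the computations are identical in substance.
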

\begin{proof}
\begin{align}  \nonumber
Var [E] & = \mathbb{E} || \nabla X_1 ||^4 - ( \mathbb{E} || \nabla X_1 ||^2 )^2\\ 
& = \mathbb{E} || \nabla X_1 ||^4 - \frac{(n-1)^2}{n^2}\ .        
\end{align}
Note that the covariant derivative on $\sqrt{n} S^{n-1}$ is the projection of the $\mathbb{R}^n$- gradient to the tangent space, which is the hyperplane orthogonal to the radial vector $\frac{\vec{x}}{||\vec{x}||}$. We have 
\begin{align}  \nonumber
|| \nabla X_1 ||^2 &= <  \nabla X_1,  \nabla X_1> \\  \nonumber
& = < \nabla_{\mathbb{R}^n} X_1 - (X \cdot\nabla_{\mathbb{R}^n} X_1)\frac{\vec{x}}{||\vec{x}||},  \nabla_{\mathbb{R}^n} X_1 - (X \cdot\nabla_{\mathbb{R}^n} X_1) \frac{\vec{x}}{||\vec{x}||}> \\  \nonumber
& = ||\nabla_{\mathbb{R}^n} X_1 ||^2 - \frac{1}{n} (X \cdot\nabla_{\mathbb{R}^n} X_1)^2\\  \nonumber
& = 1 - \frac{1}{n} X_1^2\ . \nonumber
\end{align}
From proposition 8, 
$$\mathbb{E}[X_1^2] =1,\ \ \ \  \mathbb{E}[X_1^4] = \frac{3n}{n+2}$$
This implies
\begin{align}  \nonumber
\mathbb{E}|| \nabla X_1 ||^4 & = 1 - \frac{2}{n} \mathbb{E} X_1^2 + \frac{1}{n^2} \mathbb{E} X_1^4 \\  \nonumber
& = 1 - \frac{2}{n} + \frac{3}{n(n+2)}\ .  \nonumber
\end{align}
Plugging in (4.1), we get
\begin{align}  \nonumber
Var [E] & =  \mathbb{E} || \nabla X_1 ||^4 - \frac{(n-1)^2}{n^2}\\  \nonumber
& = \frac{2(n-1)}{n^2 (n+2)}\ .  \nonumber
\end{align}
\end{proof}
Since $X_1$ is an eigenfunction of $\Delta_{\sqrt{n} S^{n-1}}$ with eigenvalue $- \frac{n -1 }{n}$, we can apply Theorem 4.1:
\begin{align}  \nonumber
d_{TV}(W,Z) & \leq \frac{2n}{n-1}\mathbb{E} |E| \\  \nonumber
& \leq  \frac{2n}{n-1} \sqrt{Var [E]}\\   \nonumber
& \leq \frac{2n}{n-1} \sqrt{\frac{2(n-1)}{n^2 (n+2)}}\\   \nonumber
& \leq \frac{2\sqrt{2}}{\sqrt{(n-1)(n+2)}}\ .   \nonumber
\end{align}
\begin{remark}
In \cite{meckes-thesis}, $W_{\epsilon}$ was a rotation of $W$ by a deterministic angle, it's obvious that the $\mathbb{E}  | W_\epsilon - W|^3 $ is of order $O(\epsilon^3)$. On the other hand, our perturbation is by Brownian motion and the corresponding generator $\Delta$ is rotational invariant, so the spherical symmetry is preserved. Although we can calculate the third moment by proposition 4.3 in this case, it's easier to verify the $\lim_{t \rightarrow 0} \frac{1}{t} \E [ | W_t - W |^2  \mathbb{I}( | W_t - W |^2 > \rho ) ] = 0  $ condition rather than bounding the higher moments in more complex cases.
\end{remark}
\section{Approximate exponentiality of $|\Tr(U^k)|^2$}
Let $U_n$ denotes the unitary group of dimension $n \times n$. In \cite{hr}, the authors investigated the linear statistics of unitary group. We briefly review the relevant background. Let $g \in L^1 (T)$ be real valued function on the one dimensional torus. We can view $g$ as a $2\pi$-periodic function on $\mathbb{R}$ by identifying $g(e^{ix})$ with $g(x)$. Then
$$ \Tr g(U) := \sum_{j =1}^{j=n} g(\theta_j) \ ,$$
where $e^{i \theta_j},\ \ 1 \leq j \leq n$ are the eigenvalues of $U_n$. We can expand $g$ into Fourier series: $g(\theta) = \sum_{j \in \mathbb{Z} } g_k e^{ik\theta}$, then we have
$$\Tr g(U) =  \sum_{j \in \mathbb{Z}} g_k \Tr (U^k)\ .$$

In this  way, the randomness is concentrated in the $\Tr (U^k )= \sum_{j =1}^{n} e^{ik\theta_j} $ for each frequency $k$, which can be seen as random Fourier basis functions. Thus it's worth studying how the norm of those basis functions behave quantitatively as the dimension $n \rightarrow \infty$. In \cite{jn}, the authors showed that as n tends to infinity, the distribution of $|\Tr(U_n)|^2$ tends to an exponential distribution with mean equals to 1. In this section, we will show that for a general power k, $|\Tr (U^k)|^2$ tends to an exponential distribution by an 
Infinitesimal version of stein method for exponential distribution. Moreover, the setting here can be generalized to general circular ensemble without any change, which will be the main topic of next section.
\subsection{Abstract approximation theorem for exponential distribution}
The first step towards establishing the stein method for exponential distribution is to find an operator $\mathcal{A}$, which acts on a large enough class of functions and 'characterizes' the exponential distribution in the sense that
$$ \mathcal{A}f(Z)=0\ ,$$
for all $f$ in a large enough class of functions and $Z$ follows the exponential distribution of mean 1. It turns out the characterizing operator for the exponential distribution is of first order:
$$\mathcal{A}f(x) = xf' (x) - (x - 1)f(x)\ .$$
It may be worth pointing out that the $-(x - 1)$ part of the Stein operator has a connection with the scale score function (see \cite{cl}) of the exponential distribution:
$$ \frac{d}{d \lambda} \log ( \lambda \exp{- \lambda x}) = \frac{1}{\lambda} - x\ .$$

For functions $h$ in the class of interest, define $f_h$ to be the solution of the Stein equation
$$
\mathcal{A}f_h(x)=h(x)-  \mathbb{E}h(Z).
$$
An iterative technique for bounding derivatives of solutions of Stein equations has been developed. However, for the infinitesimal version of exchangeable pair, we only need to bound the first derivative and the higher order term is automatically cut off. We restate the following fundamental lemma 2.2 in \cite{jn}:
\begin{lem}
Let $Z$ be a mean one exponential random variable.
If $h$ is a function such that the following integrals are well defined,
then
$$
f(w) =f_h(w)=-\frac{e^w}{w}\int_w^\infty (h(x)-\mathbb{E} h(Z))e^{-x}dx \label{stnsol}
$$
solves the differential equation
$$
wf'(w)-(w-1)f(w)=h(w)-\mathbb{E} h(Z). \label{stneq}
$$
If $h$ is absolutely continuous with $\norm{h'}<\infty$, then
$$
\norm{f}\leq \left(1+\frac{2}{e}\right)\norm{h'}, \hspace{5mm} \norm{f'}\leq  2\norm{h'}.        \label{abcobd}
$$
\end{lem}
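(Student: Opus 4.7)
The plan is two-pronged: first, verify by direct differentiation that the candidate $f$ solves the stated Stein equation, and second, derive the two sup-norm bounds via a case analysis on $w$ combined with the zero-mean identity $\int_0^\infty (h(x)-\mathbb{E}h(Z))e^{-x}\,dx = 0$. For the verification, set $g(w) = \int_w^\infty (h(x)-\mathbb{E}h(Z))e^{-x}\,dx$ so that $f(w) = -e^w g(w)/w$; using the product rule and the fundamental theorem of calculus (so that $g'(w) = -(h(w)-\mathbb{E}h(Z))e^{-w}$), compute $f'(w)$, substitute into $wf'(w)-(w-1)f(w)$, and watch the $g(w)$-terms cancel to leave $h(w)-\mathbb{E}h(Z)$. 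This step is a routine algebraic check.

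For the bound on $\|f\|$, the key observation is that the zero-mean relation allows the alternative representation $g(w) = -\int_0^w (h(x)-\mathbb{E}h(Z))e^{-x}\,dx$, which is more convenient when $w$ is small while the original form is more convenient when $w$ is large. I would then use the Lipschitz estimate $|h(x)-\mathbb{E}h(Z)| \le \|h'\|\,\mathbb{E}|x-Z|$, obtained by writing $h(x)-\mathbb{E}h(Z) = \int_0^\infty (h(x)-h(y))e^{-y}\,dy$ and applying $|h(x)-h(y)| \le \|h'\|\,|x-y|$. Splitting into the cases $w \le 1$ and $w > 1$ and plugging the appropriate representation of $g$ into $|f(w)| = e^w|g(w)|/w$, the prefactor $e^w/w$ times the integral yields a pointwise bound of the form $C(w)\|h'\|$; maximizing $C(w)$ over $w>0$ gives the claimed constant $1 + 2/e$.

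For the bound on $\|f'\|$, I would rearrange the Stein equation as $f'(w) = (h(w)-\mathbb{E}h(Z))/w + (1-1/w)f(w)$ and insert the bound on $f$ together with the Lipschitz estimate on $h-\mathbb{E}h(Z)$. The main obstacle is the apparent singularity at $w=0$ in both $1/w$ terms; however, combining them and Taylor-expanding the integral defining $f$ near zero shows $f(w) = O(w)$ as $w\to 0^+$, so the singular pieces compensate and $f'$ extends continuously across $w=0$. After making this cancellation explicit, a case split on small versus large $w$ yields the constant $2\|h'\|$. In summary, the algebraic verification is immediate, and the real analytic work lies in controlling the behavior of $f$ near $w=0$ and optimizing the prefactors in $w$ to extract the sharp constants $1 + 2/e$ and $2$.
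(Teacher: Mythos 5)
The paper itself does not prove this lemma: it is quoted as Lemma 2.2 of Fulman--Ross \cite{jn}, so there is no internal proof to compare against, and your proposal is essentially a reconstruction of the standard argument from that reference. The verification by differentiation is indeed routine, and your plan for $\norm{f}$ is sound: the zero-mean identity yields the two representations of $\int_w^\infty(h(x)-\E h(Z))e^{-x}dx$, and the estimate $|h(x)-\E h(Z)|\le \norm{h'}\,\E|x-Z|=\norm{h'}(x-1+2e^{-x})$ together with a split at $w=1$ gives the pointwise bounds $\norm{h'}\bigl(1+\frac{e^{-w}}{w}\bigr)$ for large $w$ and $\norm{h'}\,\frac{e^w}{w}\bigl(1-we^{-w}-e^{-2w}\bigr)$ for small $w$, both of which stay below $(1+2/e)\norm{h'}$; so the claimed bound follows, even though the maximization does not produce exactly the constant $1+2/e$ as you assert.

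In the $\norm{f'}$ part, however, there is a concrete error: the claim that $f(w)=O(w)$ as $w\to 0^+$ is false. From $f(w)=\frac{e^w}{w}\int_0^w(h(x)-\E h(Z))e^{-x}dx$ one gets $f(0^+)=h(0)-\E h(Z)$, which is generically nonzero. Moreover, if $f$ really vanished at $0$, the term $(h(w)-\E h(Z))/w$ in your rearranged Stein equation would have nothing to cancel against and $f'$ would blow up, so the argument as written is internally inconsistent. The true cancellation is that $f$ and $h-\E h(Z)$ share the same limit at $0$, so the combination $\bigl(h(w)-\E h(Z)-f(w)\bigr)/w$ stays bounded of order $\norm{h'}$; writing $f'(w)=\frac{(h(w)-\E h(Z))-f(w)}{w}+f(w)$, estimating the difference quotient via the Lipschitz bound on $h$ near $0$, and using the tail representation for large $w$ is the way to recover the bound. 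The bookkeeping needed to land exactly at $2\norm{h'}$ is not carried out in your sketch, but with the cancellation corrected as above the approach does go through.
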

Note that in our case, since we don't have a bound of the form $\norm{f'} \leq C \norm{h}$, we won't expect to have an approximation measured by the total variation distance. Instead, we need to introduce the following class of functions for $t,x \geq 0$, and $\delta \geq 0$,
 \begin{equation}\label{htd}
 h_{t,\delta}(x) = \left\{
     \begin{array}{ll}      
       1, &  x \leq t-\delta,\\
       1-\frac{2(x-t+\delta)^2}{\delta^2}, &t-\delta<x\leq t-\delta/2 ,\\
       \frac{2(x-t)^2}{\delta^2},&t-\delta/2<x\leq t ,\\
       0, & x > t .              
     \end{array}
   \right.
\end{equation}
We can derive the following bound on the Kolmogolov distance (see lemma 2.3 in \cite{jn}):
\begin{lem}\label{lemhtd}
If $t\geq0$, $\delta>0$, and  $h_{t,\delta}$ is defined by (5.1), then
$$
\norm{h_{t,\delta}}=1, \hspace{5mm} \norm{h_{t,\delta}'}=2/\delta, \hspace{5mm} \norm{h_{t,\delta}''}=4/\delta^2.
$$
If $W\geq0$ is a random variable and $Z$ has the exponential distribution with mean one, then
$$
d_K(W,Z)\leq\sup_{t\geq0}\abs{\mathbb{E} h_{t,\delta}(W)-\mathbb{E} h_{t,\delta}(Z)}+\delta/2. \label{smo}
$$
\end{lem}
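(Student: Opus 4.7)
My plan is to dispatch the three sup-norm claims by direct piecewise differentiation, and then derive the Kolmogorov estimate by a smoothed bracketing argument that uses two translates of $h_{t,\delta}$ to sandwich the indicator $\mathbb{I}(\,\cdot \leq t)$ from above and below.

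For the norm identities I would just differentiate the piecewise formula (5.1). On the two quadratic pieces one has $h_{t,\delta}'(x) = -4(x-t+\delta)/\delta^2$ and $h_{t,\delta}'(x) = 4(x-t)/\delta^2$ respectively, and the values of $h_{t,\delta}'$ at the three breakpoints $x = t-\delta,\ t-\delta/2,\ t$ are $0,\ -2/\delta,\ 0$, so $h_{t,\delta}$ is $C^1$. This gives at once $\norm{h_{t,\delta}} = 1$ (attained on $x \leq t-\delta$), $\norm{h_{t,\delta}'} = 2/\delta$ (attained at $x = t - \delta/2$), and $\norm{h_{t,\delta}''} = 4/\delta^2$, since $h_{t,\delta}''$ is piecewise constant taking the values $-4/\delta^2$, $+4/\delta^2$, and $0$.

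For the Kolmogorov estimate, the key pointwise observation is the sandwich
$$h_{t,\delta}(x) \leq \mathbb{I}(x \leq t) \leq h_{t+\delta,\delta}(x) \qquad \text{for all } x \in \mathbb{R}\ ,$$
which is immediate from the definition of $h_{s,\delta}$ as a function equal to $1$ on $x \leq s-\delta$, equal to $0$ on $x > s$, and taking values in $[0,1]$ in between. Taking expectations in $W$ and in $Z$ yields the bracket
$$\mathbb{E} h_{t,\delta}(W) \leq \mathbb{P}(W \leq t) \leq \mathbb{E} h_{t+\delta,\delta}(W)\ ,$$
and analogously for $Z$, which I would then use to split
$$\mathbb{P}(W \leq t) - \mathbb{P}(Z \leq t) \leq \left[\mathbb{E} h_{t+\delta,\delta}(W) - \mathbb{E} h_{t+\delta,\delta}(Z)\right] + \left[\mathbb{E} h_{t+\delta,\delta}(Z) - \mathbb{P}(Z \leq t)\right]\ ,$$
with the mirror-image bound on the other side obtained via $h_{t,\delta}$.

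The only remaining and only genuinely computational step is to bound each $Z$-side smoothing error by $\delta/2$. A short calculation on the two quadratic pieces gives $\int_{t-\delta}^t h_{t,\delta}(x)\, dx = \delta/2$, and therefore also $\int_{t-\delta}^t (1-h_{t,\delta}(x))\, dx = \delta/2$. Since the exponential density $e^{-x}$ is bounded by $1$ on $[0, \infty)$, this yields
$$\mathbb{E} h_{t+\delta,\delta}(Z) - \mathbb{P}(Z \leq t) = \int_t^{t+\delta} h_{t+\delta,\delta}(x) e^{-x}\, dx \leq \delta/2$$
and symmetrically $\mathbb{P}(Z \leq t) - \mathbb{E} h_{t,\delta}(Z) \leq \delta/2$; the case $t < \delta$ merely truncates the integration at $0$, which only tightens the bound. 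Combining these estimates with the two-sided bracket and taking $\sup_{t \geq 0}$ delivers the claim. I do not expect any substantial obstacle beyond choosing the right bracketing translates; the shape integral value $\delta/2$ is precisely what produces the additive $\delta/2$ in the statement.
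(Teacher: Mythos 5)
Your proposal is correct: the sandwich $h_{t,\delta}(x)\leq \mathbb{I}(x\leq t)\leq h_{t+\delta,\delta}(x)$ together with the shape integral $\int_{t-\delta}^{t}h_{t,\delta}=\delta/2$ and the bound $e^{-x}\leq 1$ on the exponential density gives exactly the stated Kolmogorov estimate, and the piecewise differentiation gives the three norm identities (with $\norm{h_{t,\delta}''}$ understood almost everywhere, since $h_{t,\delta}'$ is only Lipschitz at the breakpoints). The paper does not prove this lemma but simply quotes Lemma 2.3 of the Fulman--Ross reference, and your argument is essentially the standard smoothing proof given there, so there is nothing to add.
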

Now we are ready to prove the infinitesimal version of approximation theorem for exponential distribution with mean equals to 1:
\begin{thm}\label{abscont}Let Z be a mean 1 exponential distributed random variable. If $W \geq 0$ is random variable with finite second moment and $(W,W_t)$ is a family of 
exchangeable pairs defined on a common probability space. Let $F$ be a $\sigma -$algebra s.t $\sigma (W) \subseteq F$ and Suppose there are $F-$measurable random variables $E$ and $E'$ such that
\begin{enumerate}
\item $$\mathbb{E}\left[W_t -W\big|
F\right]=\Lambda t(1-W)+tE + O(t^2),$$\label{lindiff3}
\item $$\mathbb{E}\left[(W_t-W)^2\big|
F\right]=2\Lambda t W +tE'+ O(t^2),$$\label{quaddiff3}
\item For each $ \rho > 0$,$$\lim_{t \rightarrow 0} \frac{1}{t} \mathbb{E} [ | W_t - W |^2  \mathbb{I}( | W_t - W |^2 > \rho ) ] = 0 \ . $$\label{cubediff3}
\end{enumerate}
Then, for $\forall \delta \geq 0$,
$$d_{K}(W,Z)\le \frac{1}{\Lambda \delta}\left[2
\mathbb{E}\left|E'\right|+(1+\frac{2}{e})\mathbb{E}\left|E\right|\right] + \frac{\delta}{2}\ .     \eqno{(5.2)}$$

\end{thm}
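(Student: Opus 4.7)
The plan is to follow the standard infinitesimal Stein's method template adapted to the first-order exponential characterizing operator $\mathcal{A}f(w) = wf'(w) - (w-1)f(w)$. Given $t_0 \geq 0$ and $\delta > 0$, take $h = h_{t_0, \delta}$ from (5.1) and let $f = f_h$ be its Stein solution from Lemma 5.1, so that $\|f\|_\infty \leq 2(1+2/e)/\delta$ and $\|f'\|_\infty \leq 4/\delta$. By Lemma 5.2, it suffices to bound $|\mathbb{E}\mathcal{A}f(W)| = |\mathbb{E}h(W) - \mathbb{E}h(Z)|$ uniformly in $t_0$ and then add $\delta/2$.

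The key identity comes from applying exchangeability of $(W, W_t)$ to the antisymmetric functional $(w, w') \mapsto (w' - w)(f(w) + f(w'))$, yielding $\mathbb{E}[(W_t - W)(f(W_t) + f(W))] = 0$. Splitting $f(W_t) + f(W) = 2f(W) + (f(W_t) - f(W))$ and Taylor-expanding $f(W_t) - f(W) = f'(W)(W_t - W) + R$ rewrites this as
$$2\mathbb{E}[(W_t - W)f(W)] + \mathbb{E}[f'(W)(W_t - W)^2] + \mathbb{E}[(W_t - W)R] = 0.$$
Taking conditional expectations with respect to $F$ and substituting hypotheses (1) and (2), the first two terms contribute $2t\Lambda\mathbb{E}[\mathcal{A}f(W)] + 2t\mathbb{E}[f(W)E] + t\mathbb{E}[f'(W)E']$ up to $O(t^2)$. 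Dividing by $t$ and letting $t \to 0$ gives
$$\mathbb{E}[\mathcal{A}f(W)] = -\frac{1}{2\Lambda}\bigl(2\mathbb{E}[f(W)E] + \mathbb{E}[f'(W)E']\bigr),$$
provided the remainder $t^{-1}\mathbb{E}[(W_t - W)R]$ vanishes in the limit.

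The main obstacle is showing this Taylor remainder is $o(t)$; this is where condition (3) plays its decisive role. I would split along a truncation level $\rho > 0$. On the event $\{|W_t - W|^2 \le \rho\}$ use the quadratic estimate $|R| \le \tfrac{1}{2}\|f''\|_\infty (W_t - W)^2$, which together with hypothesis (2) gives an $O(\sqrt{\rho}\,t)$ contribution; on the complementary event use the crude Lipschitz estimate $|R| \le 2\|f'\|_\infty |W_t - W|$, yielding a contribution at most $2\|f'\|_\infty \mathbb{E}[(W_t - W)^2 \mathbb{I}(|W_t - W|^2 > \rho)]$, which is $o(t)$ directly from condition (3). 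Sending $t \to 0$ first and $\rho \to 0$ afterwards forces the remainder to be $o(t)$. The one delicate input is the boundedness of $\|f''\|_\infty$, which I would derive from the differentiated Stein equation $wf''(w) = h'(w) + (w-2)f'(w) + f(w)$; the apparent singularity at $w = 0$ is removable since the numerator vanishes there as a consistency check, and a L'H\^opital argument supplies a uniform bound in terms of $\|h'\|_\infty$ and $\|h''\|_\infty$. Once the remainder vanishes, inserting the Lemma 5.1 bounds into the displayed identity for $\mathbb{E}[\mathcal{A}f(W)]$ and invoking Lemma 5.2 yields the claimed Kolmogorov estimate.
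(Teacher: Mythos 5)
Your proposal is correct and follows essentially the same route as the paper: the exchangeability identity applied to $(w'-w)(f(w)+f(w'))$, a Taylor expansion combined with conditions (1)--(2) to identify the limiting identity for $\mathbb{E}[\mathcal{A}f(W)]$, condition (3) together with a $\rho$-truncation to eliminate the remainder, and Lemma 5.1/Lemma 5.2 with $h_{t,\delta}$ to pass to the Kolmogorov distance. Your remainder treatment is in fact slightly more careful than the paper's (which bounds $|R|\le K|W_t-W|^3$ without verifying a bound on $f''$), and your bookkeeping produces $\frac{2(1+\frac{2}{e})}{\Lambda\delta}\mathbb{E}|E|$ rather than the stated $\frac{(1+\frac{2}{e})}{\Lambda\delta}\mathbb{E}|E|$ --- a factor of $2$ the paper itself drops when replacing $2\,\mathbb{E}[(W_t-W)\mid W]\,f(W)$ by $E f(W)$ in its displayed identity, so this discrepancy reflects the stated constant rather than a flaw in your argument.
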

\begin{proof}
Since $(W_t, W)$ is an exchangeable pair,  we have the identity         
\begin{align}        \nonumber
0 &= \mathbb{E} [ (W_t - W) (f(W_t) + f(W))]\\            \nonumber
& = \mathbb{E} [ (W_t - W) (f(W_t) - f(W)) + 2(W_t - W) f(W)] \ ,           \nonumber
\end{align}
by Taylor expansion,
$$0 = \mathbb{E} \{ \mathbb{E} [ (W_t - W)^2 | W] f' (W) + 2 \mathbb{E} [(W_t - W)| W] f(W)] + R\}\ .$$
For the reminder term $R$, there exists a real number $K$ depending on the function $f$, such that
$$|R| \leq K |W_t - W|^3\ .$$
Fix $ \rho > 0$, decomposing the integrand into two sets
$$\{|W_t - W| \leq \rho \} \ \  and \ \ \{|W_t - W| \ge \rho \},$$
we obtain
\begin{align}           \nonumber
\lim_{t \rightarrow 0} \frac{1}{t} \mathbb{E} |R| &\leq \lim_{t \rightarrow 0} \frac{K}{t} \mathbb{E} [ |W_t - W|^3  \mathbb{I}( |W_t - W| \leq \rho ) + |W_t - W|^3  \mathbb{I}( |W_t - W| > \rho )]\\       \nonumber
& \leq \lim_{t \rightarrow 0}  K \rho \frac{\mathbb{E} |W_t - W|^2}{t} + \lim_{t \rightarrow 0} \frac{K}{t} \mathbb{E} [ |W_t - W|^2 \mathbb{I}( |W_t - W| > \rho )]  \ ,      \nonumber
\end{align}
by condition (3), the second term is zero and by condition (2),
$$ \lim_{t \rightarrow 0} \frac{K \rho}{t} \mathbb{E} | W_t - W|^2 \leq cK\rho$$
for some constant c that depends on the distribution of $W$. Let $\rho \rightarrow 0$, 
$$\lim_{t \rightarrow 0} \frac{1}{t} \mathbb{E}|R| = 0\ .$$
Now we are left with two terms, dividing both side by t and take the limit, we have
$$0 = \mathbb{E} [ 2 \Lambda W f'(W) + E' f'(W) + 2 \Lambda (1-W) f(W) + E f(W) ]\ .$$
Since $f$ is the solution of the stein equation with respect to $g(x)$, this implies
$$0 = 2 \Lambda \mathbb{E} [ g(W) - \mathbb{E} g(Z) ] + \mathbb{E} [ E' f'(W) + Ef(W)]\ ,$$
by lemma 5.1, we have
$$\mathbb{E} [ g(W) - \mathbb{E} g(Z) ] \leq \frac{1}{2 \Lambda} ( 1+ \frac{2}{e}) \norm{g'} \cdot \mathbb{E} |E| + \frac{1}{\Lambda} \norm{g'} \mathbb{E} |E'|\ .$$
Let $g(x)$ be in the function class of $h_{t,\delta}$, then (5.2) leads to
\begin{align}           \nonumber
d_K(W,Z)&\leq\sup_{t\geq0}\abs{\mathbb{E} h_{t,\delta}(W)-\mathbb{E} h_{t,\delta}(Z)}+\delta/2 \\         \nonumber
&\leq \frac{1}{\Lambda \delta} (1+ \frac{2}{e}) \mathbb{E} |E| + \frac{2}{\Lambda \delta} \mathbb{E} |E'| +\delta/2    \ .         \nonumber
\end{align}
\end{proof}
\subsection{Approximate normality of  $|\Tr (U^k)|^2$}
To apply Theorem 5.3, the following lemma on the moment calculation of traces of unitary group is crucial (see \cite{hr,ds}):
\begin{lem}  Let $U$ be Haar distributed on $U(n,\mathbb{C})$. Let $(a_1,\cdots,a_k)$ and
$(b_1,\cdots,b_k)$ be vectors of non-negative integers. Then one has that for all $n \geq \sum_{i=1}^k (a_i+b_i),$
\[ \left[ \prod_{j=1}^k Tr(U^j)^{a_j} \overline{Tr(U^j)^{b_j}} \right] = \delta_{\vec{a} \vec{b}} \prod_{j=1}^k j^{a_j} a_j!. \]
\end{lem}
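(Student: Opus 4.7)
The plan is to deduce the moment identity from the representation theory of the unitary group, following the classical Diaconis--Shahshahani argument. Writing the eigenvalues of $U$ as $z_1, \ldots, z_n$, one has $\Tr(U^j) = p_j(z) := \sum_i z_i^j$, so the product inside the expectation equals $p_\lambda(z)\,\overline{p_\mu(z)}$, where $\lambda$ (resp.\ $\mu$) is the partition with $a_j$ (resp.\ $b_j$) parts equal to $j$. In particular, the target $\delta_{\vec{a}\vec{b}}$ is equivalent to $\lambda = \mu$.

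The first step is the Frobenius character formula, which expands power sums in the Schur basis: $p_\lambda = \sum_\nu \chi^\nu_\lambda\, s_\nu$, summed over partitions $\nu$ of $|\lambda|$, where $\chi^\nu_\lambda$ is the character of the irreducible $S_{|\lambda|}$-representation indexed by $\nu$, evaluated on a permutation of cycle type $\lambda$. Applied to the eigenvalues of a unitary matrix, $s_\nu(z_1,\ldots,z_n)$ coincides with the character of the irreducible polynomial representation of $U(n)$ of highest weight $\nu$ whenever $\ell(\nu) \leq n$, and vanishes identically otherwise. The dimension hypothesis on $n$ places us in this stable range for every $\nu$ contributing to the expansions of $p_\lambda$ and $p_\mu$.

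The second step is Schur orthogonality of characters on $U(n)$: for partitions with at most $n$ parts, $\int_{U(n)} s_\nu(U)\,\overline{s_{\nu'}(U)}\,dU = \delta_{\nu\nu'}$. Substituting the two Frobenius expansions and using that the symmetric-group characters $\chi^\nu_\lambda$ are real integers, the expectation collapses to $\sum_\nu \chi^\nu_\lambda\,\chi^\nu_\mu$. If $|\lambda| \neq |\mu|$ there is no common $\nu$ in the two index sets and the sum vanishes, delivering the Kronecker factor on cross-terms. If $|\lambda| = |\mu|$, the column orthogonality of irreducible characters of $S_{|\lambda|}$ yields $\sum_\nu \chi^\nu_\lambda\,\chi^\nu_\mu = z_\lambda\,\delta_{\lambda\mu}$, where $z_\lambda = \prod_j j^{a_j}\, a_j!$ is the order of the centralizer of a permutation of cycle type $\lambda$. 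Combining the two cases produces exactly $\delta_{\vec{a}\vec{b}}\, \prod_j j^{a_j}\, a_j!$.

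The only real obstacle is maintaining the stability condition $\ell(\nu) \leq n$ uniformly over all $\nu$ that arise in the expansions, so that each $s_\nu$ is genuinely an irreducible $U(n)$ character and Schur orthogonality produces no correction from rectification. Under the stated lower bound on $n$ this is a routine bookkeeping check, and once it is in place the theorem reduces to the two well-known orthogonality relations above --- one on $U(n)$, one on $S_{|\lambda|}$ --- glued together by Frobenius's formula.
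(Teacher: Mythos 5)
The paper does not prove this lemma at all; it is quoted verbatim from Diaconis--Shahshahani (and Hughes--Rudnick), and your argument is precisely the classical proof from that source: Frobenius expansion $p_\lambda=\sum_\nu\chi^\nu_\lambda s_\nu$, the fact that $s_\nu$ restricted to eigenvalues of $U\in U(n)$ is an irreducible character when $\ell(\nu)\le n$ and vanishes otherwise, Schur orthogonality on $U(n)$, and column orthogonality in $S_m$ giving $\sum_{\nu\vdash m}\chi^\nu_\lambda\chi^\nu_\mu=z_\lambda\delta_{\lambda\mu}$ with $z_\lambda=\prod_j j^{a_j}a_j!$. So the route is correct and is the intended one.

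One caveat, and it is exactly at the step you call ``routine bookkeeping'': the stable range requires $\ell(\nu)\le n$ for \emph{every} $\nu\vdash|\lambda|$ and every $\nu\vdash|\mu|$, i.e.\ $n\ge|\lambda|=\sum_j j\,a_j$ and $n\ge|\mu|=\sum_j j\,b_j$. The hypothesis as printed in the paper, $n\ge\sum_{i}(a_i+b_i)$, is the \emph{unweighted} sum $\ell(\lambda)+\ell(\mu)$ and does not guarantee this; indeed the identity fails under that reading (take $a_3=b_3=1$, all other entries $0$, and $n=2$: then $\mathbb{E}|\Tr(U^3)|^2=\min(3,n)=2\neq 3$, although $n\ge a_3+b_3$). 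The condition should be read as $n\ge\sum_i i\,(a_i+b_i)$ (the original Diaconis--Shahshahani hypothesis; Diaconis--Evans later relaxed it to $n\ge\max(\sum_i i\,a_i,\sum_i i\,b_i)$), and under that reading your assertion that all contributing $\nu$ lie in the stable range is correct and the proof is complete. So: right proof, but state the stability check against the weighted sum rather than taking the printed bound at face value.
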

Let $\lambda = (\lambda_1, \lambda_2, \dots )$ be a partition, that is, the sequence is in non-increasing order and only finite of $\lambda_i$ are nonzero. Let $m_j$ denotes the multiplicity of part j in $\lambda$ and $l(\lambda)$ the length of $\lambda$:\ $l(\lambda) = m_1(\lambda) + m_2(\lambda) + \dots,$ then $$p_{\lambda} (U) : = \prod_{i = 1}^{l(\lambda)} \Tr (U^i )^{m_i} \ .$$  
We need a lemma from \cite{Lt} for the expression of the gradient and also the Laplacian of the trace powers of unitary group in terms of $p_{\lambda}$:
\begin{lem}  Let $j > 0$ an integer, $p_j(U(n))$ be as above. Then $\overline{p_j} = p_{-j}$ and
\begin{enumerate}
\item \[ \Delta_{U(n)} p_j = - nj p_j - j \sum_{1 \leq l <j} p_{l,j-l}.\]
\item \[ \Delta_{U(n)} p_{j,j} = - 2nj p_{j,j} - 2j^2 p_{2j} - 2j p_j \sum_{1 \leq l <j} p_{l,j-l} .\]
\item \[ \Delta_{U(n)} \left( p_j \overline{p_j} \right) = 2j^2n - 2njp_j \overline{p_j} - jp_j \sum_{1 \leq l < j} \overline{p_{l,j-l}} - j \overline{p_j} \sum_{1 \leq l < j} p_{l,j-l} .\]
\end{enumerate}
\end{lem}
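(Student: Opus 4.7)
The plan is to realise $\Delta_{U(n)} = \sum_\alpha (X_\alpha^L)^2$, where $\{X_\alpha\}$ is an orthonormal basis of the Lie algebra $\mathfrak{u}(n)$ under the bi-invariant inner product $\langle X,Y\rangle = -\Tr(XY)$ and $X_\alpha^L$ is the left-invariant vector field on $U(n)$ induced by $X_\alpha$. A direct differentiation gives
\begin{equation*}
X_\alpha^L p_j(U) = \frac{d}{dt}\Big|_{t=0}\Tr\bigl((Ue^{tX_\alpha})^j\bigr) = j\,\Tr(U^j X_\alpha),
\end{equation*}
and a second application then yields $(X_\alpha^L)^2 p_j(U) = j\sum_{k=1}^{j}\Tr(U^k X_\alpha U^{j-k} X_\alpha)$, so part (1) reduces to evaluating this inner expression after summing over $\alpha$.

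That summation is controlled by two contraction identities, valid for any properly normalised skew-Hermitian orthonormal basis and for arbitrary $n\times n$ matrices $A,B$:
\begin{equation*}
\sum_\alpha X_\alpha A X_\alpha = -\Tr(A)\,I, \qquad \sum_\alpha \Tr(A X_\alpha)\Tr(B X_\alpha) = -\Tr(AB),
\end{equation*}
both of which are direct consequences of the coordinate identity $\sum_\alpha (X_\alpha)_{pq}(X_\alpha)_{rs} = -\delta_{ps}\delta_{qr}$. I would establish this coordinate identity once and for all by summing over the three natural families of basis elements ($(E_{ab}-E_{ba})/\sqrt{2}$ and $i(E_{ab}+E_{ba})/\sqrt{2}$ for $a<b$, and $iE_{aa}$). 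Applied to $\sum_\alpha\Tr(U^k X_\alpha U^{j-k} X_\alpha)$ via the cyclic property of the trace, the first identity gives $-\Tr(U^{j-k})\Tr(U^k) = -p_{j-k}p_k$; summing $k$ from $1$ to $j$ and separating the boundary term $k=j$ (where $p_0 = \Tr(I) = n$) produces $\Delta_{U(n)} p_j = -nj\,p_j - j\sum_{1\le l<j}p_{l,j-l}$.

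Parts (2) and (3) then follow by the product rule $\Delta(fg) = (\Delta f)g + f(\Delta g) + 2\sum_\alpha(X_\alpha^L f)(X_\alpha^L g)$. Taking $f=g=p_j$ and invoking the second contraction identity with $A=B=U^j$ gives the gradient term $\sum_\alpha(X_\alpha^L p_j)^2 = -j^2 p_{2j}$, and combining with part (1) produces the formula in (2). For (3), I would first observe that $\overline{p_j}(U) = \Tr((U^*)^j) = \Tr(U^{-j}) = p_{-j}$, from which the analogous first-order computation gives $X_\alpha^L p_{-j} = -j\,\Tr(U^{-j} X_\alpha)$. The second contraction identity then evaluates the cross term as $\sum_\alpha (X_\alpha^L p_j)(X_\alpha^L p_{-j}) = j^2\Tr(U^j U^{-j}) = j^2 n$, which is the unique source of the constant $+2j^2 n$; the remaining pieces come from $\Delta p_j$ and $\Delta p_{-j} = \overline{\Delta p_j}$.

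There is no conceptual obstacle beyond careful bookkeeping. The main trap is the boundary index $k=j$ in part (1) (and $k=0$ in the corresponding derivation of $\Delta p_{-j}$ needed for part (3)), which is the sole source of the terms proportional to the dimension $n$: absorbing it silently into the sum over $1\le l<j$ would make the $-nj\,p_j$ prefactor and the $+2j^2n$ constant vanish. The other routine-but-essential check is to keep the normalisation of $\{X_\alpha\}$ consistent with the inner product that turns $\sum_\alpha(X_\alpha^L)^2$ into precisely the Laplacian appearing in the statement, rather than some scalar multiple of it.
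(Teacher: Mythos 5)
Your proof is correct, and every step checks out: the coordinate identity $\sum_\alpha (X_\alpha)_{pq}(X_\alpha)_{rs}=-\delta_{ps}\delta_{qr}$ for the stated orthonormal basis of $\mathfrak{u}(n)$, the two contraction identities it implies, the first and second derivative formulas $X_\alpha^L p_j = j\Tr(U^jX_\alpha)$ and $\sum_\alpha(X_\alpha^L)^2p_j = j\sum_{k=1}^{j}\sum_\alpha\Tr(U^kX_\alpha U^{j-k}X_\alpha)$, the isolation of the $k=j$ (resp.\ $k=0$) boundary term producing the factors of $n$, the cross-term sign giving $+j^2n$, and the normalization remark (indeed the metric $\langle X,Y\rangle=-\Tr(XY)$ is the one consistent with the paper's later use of $\nabla p_k\cdot\nabla p_k=-k^2p_{2k}$). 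Note, however, that the paper does not prove this lemma at all: it is quoted from Levy's paper \cite{Lt}, so there is no internal proof to compare against; your argument is essentially the standard Casimir/left-invariant-vector-field computation that underlies the cited source. The closest computation the paper actually carries out is the $\beta$-generalization in Section 6, where the analogous identities for the Dyson operator $D$ are derived by direct manipulation of the power sums $p_j(x)=\sum_a e^{ijx_a}$ and the cotangent drift, recovering your formulas at $\beta=2$. The two routes are complementary: yours works on the group itself and makes the $n$-dependent terms transparent via the trace contractions, while the eigenvalue/power-sum route extends to all $\beta\ge 1$ where no underlying Lie group is available.
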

Form this we can easily deduce the formula for the gradient of trace power functions:
\begin{align}     \nonumber
(\nabla p_k(U)) \cdot (\nabla p_k(U)) &= \frac{1}{2}[ \Delta( p_k(U)^2) - 2 p_k(U) \Delta  p_k(U)]\\         \nonumber
& = -k^2 \cdot  p_{2k}(U) \ .       \nonumber
\end{align}

Let $W(U) = |\Tr (U^k)|^2 / k = \Tr (U^k)\cdot \overline{\Tr (U^k)} / k$. We plan to prove that $W$ tends to the exponential distribution by checking the three conditions and identifying $\Lambda$, $E$ and $E'$ of Theorem 5.3. 

The first step is to perturb $W$ by running a Brownian motion starting at $U$ and we denote it by $U_t$. Let $W_t = |\Tr (U_t^k)|^2 / k$, then $(W_t , W)$ forms an exchangeable pair under the Haar measure of the unitary group. Since $ W_t = e^{\Delta t} W$, we have
\begin{align}        \nonumber
\mathbb{E} [ W_t - W | W] & = t \Delta (p_k \overline{p_k} / k) + O(t^2)\\            \nonumber
& = 2n( k - p_k \overline{p_k} ) - p_k \sum_{1 \leq l < k} \overline{p_{l,k-l}} - \overline{p_k} \sum_{1 \leq l < k} p_{l,k-l}\\         \nonumber
&= 2nk( 1 -W) - p_k \sum_{1 \leq l < k} \overline{p_{l,k-l}} - \overline{p_k} \sum_{1 \leq l < k} p_{l,k-l} \ .    \nonumber
\end{align}
Unlike the previous case, the residue term $E : = p_k \sum_{1 \leq l < k} \overline{p_{l,k-l}} + \overline{p_k} \sum_{1 \leq l < k} p_{l,k-l}$ is not zero.\\
By lemma 3.3, we have
\begin{align}     \nonumber
\mathbb{E} [ (W_t - W)^2 | W] & = 2 t \nabla W \cdot \nabla W + O(t^2)\\          \nonumber
&= 2t \nabla p_k \cdot \nabla p_k \cdot (\overline{p_k} \cdot \overline{p_k})/ k^2                   \nonumber
 + 4t  \nabla p_k \cdot \nabla \overline{p_k} \cdot ( p_k \cdot  \overline{p_k}) / k^2 \\       \nonumber
&\ \   +  2t \nabla \overline{p_k} \cdot \nabla \overline{p_k} \cdot (p_k \cdot p_k) / k^2 +   O(t^2) \\       \nonumber
& = -2 p_{2k} (\overline{p_k})^2 + 4 n p_k \overline{p_k} - 2 \overline{p_{2k}} (p_{2k})^2 + O(t^2)\\    \nonumber
& = 4knW - 2p_{2k} (\overline{p_k})^2 - 2 \overline{p_{2k}} (p_k )^2 + O(t^2)  \ .   \nonumber
\end{align}
We find out that $\Lambda = 2kn$, $E' =  - 2p_{2k} (\overline{p_k})^2 - 2 \overline{p_{2k}} (p_k )^2$.  The next task is to derive upper bounds for the norm of the two residue terms $E$ and $E'$.
\begin{align}    \nonumber
\mathbb{E} [ E^2] &= \mathbb{E} [p_k^2 ( \sum_{1 \leq l < k} \overline{p_{l,k-l}})^2] + \mathbb{E}[\overline{p_k}^2 ( \sum_{1 \leq l < k} p_{l,k-l})^2] \\    \nonumber
&\ \ \ \ + 2 p_k \overline{p_k} ( \sum_{1 \leq l < k} \overline{p_{l,k-l}} \cdot \sum_{1 \leq l < k} p_{l,k-l}) \ ,    \nonumber
\end{align} 
by lemma 5.5 and only keep terms with non-zero expectation, we have

\begin{align} \nonumber
\mathbb{E} [ E^2] &= \mathbb{E} \sum_{1 \leq l < k} (p_{l,k -l} \cdot p_k) \cdot \sum_{1 \leq l < k} (\overline{p_{l,k - l}} \cdot \overline{p_k}) \\     \nonumber
&  =
\left\{
             \begin{array}{lr}
              \sum_{1 \leq l < k} l \cdot (k - l) = \frac{k^3 - k}{6}, \ \ \ \ \  \ \ \ \ \ \ \ \ \ \ \ \ if\  k\  is \ odd&  \\     
              \sum_{1 \leq l < k}  l \cdot (k - l)  + \frac{k^2}{4} = \frac{2k^3 + 3k^2 - 2k}{12}.  \ \ if\  k\  is \ even &  
             \end{array}
\right.
\end{align}
It implies that for all $k > 0$,
$$\mathbb{E} [ E^2] \leq \frac{2k^3 + 3k^2 - 2k}{12}\ ,$$
and
\begin{align}   \nonumber
\mathbb{E} [ (E')^2] &= 4 \mathbb{E} [ (p_{2k})^2 (\overline{p_k})^4] + 4 \mathbb{E} [ (\overline{p_{2k}})^2 (p_k)^4]\\        \nonumber
&\ \ \ + 8 \mathbb{E} [p_{2k} (p_k)^2 \cdot \overline{p_{2k}} (\overline{p_{k}})^2] \ .         \nonumber
\end{align}
By lemma 5.5, the first two terms vanish, we have
\begin{align}      \nonumber
\mathbb{E} [ (E')^2] &=8 \mathbb{E} [p_{2k} \overline{p_{2k}} \cdot (p_k \overline{p_{k}})^2] \\      \nonumber
& = 32 k^3\ .        \nonumber
\end{align}
Finally, by Theorem 5.3, we conclude that for $\forall \delta > 0$,
\begin{align}          \nonumber
d_{K}(W,Z) &\le \frac{1}{\Lambda \delta}\left[2
\mathbb{E}\left|E'\right|+(1+\frac{2}{e})\mathbb{E}\left|E\right|\right] + \frac{\delta}{2} \\       \nonumber
&\leq \frac{1}{\Lambda \delta}\left[2
\sqrt{\mathbb{E}\left|E'\right|^2}+(1+\frac{2}{e})\sqrt{\mathbb{E}\left|E\right|^2}\right] + \frac{\delta}{2}\\           \nonumber
&\leq \frac{1}{2kn\delta}(1+\frac{2}{e})\sqrt{\frac{2k^3 + 3k^2 - 2k}{12}} + \frac{1}{kn\delta}\sqrt{32k^3} + \frac{\delta}{2}\\         \nonumber
& \leq (\frac{1}{2} + 4\sqrt{2})\frac{\sqrt{k}}{n\delta} + \frac{\delta}{2}\ .
\end{align}
Choose the optimal $\delta = \sqrt{(1+8\sqrt{2})\frac{\sqrt{k}}{n}}$, we have proved the following theorem:
\begin{theorem} \label{main2} Let $W=|Tr(U^k)|^2$, where $U$ is from the Haar measure of unitary group and $k$ is a positive integer. Let $Z$ denotes the exponential  random variable with mean one. Then we have
$$d_{K}(W,Z) \le \sqrt{(1+8\sqrt{2})\frac{\sqrt{k}}{n}}\ .$$
\end{theorem}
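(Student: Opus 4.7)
The plan is to apply the infinitesimal exponential approximation theorem (Theorem 5.3) to the exchangeable pair $(W, W_t)$, where $W = p_k \overline{p_k}/k$ and $W_t = p_k(U_t)\overline{p_k(U_t)}/k$, with $U_t$ the Brownian motion on $U(n)$ started from the Haar-distributed $U$. Since Haar measure is stationary for this Brownian motion and $U(n)$ is compact, $(W,W_t)$ is genuinely exchangeable. Condition (3) of Theorem 5.3 then follows directly from the drift-free case of Lemma 3.5 applied to the smooth function $W$ on the compact manifold $U(n)$, so no further work is needed there.

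For conditions (1) and (2) I would expand, using (2.2) and Lemma 3.3,
\begin{align*}
\mathbb{E}[W_t - W \mid W] &= t\, \Delta W + O(t^2),\\
\mathbb{E}[(W_t - W)^2 \mid W] &= 2t\, \langle \nabla W, \nabla W\rangle + O(t^2),
\end{align*}
and substitute the Laplacian identities from Lemma 5.6. This gives
$$\Delta W = 2nk(1-W) - \tfrac{1}{k}\Bigl(p_k \sum_{1\le l<k}\overline{p_{l,k-l}} + \overline{p_k}\sum_{1\le l<k} p_{l,k-l}\Bigr),$$
so $\Lambda = 2nk$ with a non-trivial remainder $E$. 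For the quadratic term I would use the polarization $\langle \nabla f, \nabla g\rangle = \tfrac{1}{2}(\Delta(fg) - f\Delta g - g\Delta f)$ together with Lemma 5.6 to deduce $\langle \nabla p_k, \nabla p_k\rangle = -k^2 p_{2k}$ and its conjugate/cross analogues, arriving at $2\langle \nabla W, \nabla W\rangle = 4nkW - 2p_{2k}\overline{p_k}^2 - 2\overline{p_{2k}}p_k^2$ and hence identifying $E'$.

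The main computational obstacle is bounding $\mathbb{E}|E|$ and $\mathbb{E}|E'|$ sharply enough that the final constants close. My plan is to pass through second moments via Cauchy--Schwarz, expanding $E^2$ and $(E')^2$ and applying the Diaconis--Shahshahani formula (Lemma 5.5). For $E$ the non-vanishing contributions come from pairing $p_k p_{l,k-l}$ with its conjugate, yielding $\sum_{1\le l<k} l(k-l)$, plus an extra $k^2/4$ when $k$ is even from the self-pairing at $l=k/2$; this gives the uniform bound $\mathbb{E}[E^2] \le (2k^3 + 3k^2 - 2k)/12$. For $E'$ the only surviving cross term is $8\,\mathbb{E}[p_{2k}\overline{p_{2k}}(p_k\overline{p_k})^2] = 32k^3$. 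Here one has to keep track of the hypothesis $n \ge \sum(a_j+b_j)$ in Lemma 5.5; since the total degrees involved are linear in $k$, the formula applies whenever $n$ is at least a fixed constant multiple of $k$, and the stated bound is vacuous otherwise.

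Substituting these estimates into the conclusion of Theorem 5.3 produces a bound of the form $d_K(W,Z) \le (\tfrac{1}{2} + 4\sqrt{2})\sqrt{k}/(n\delta) + \delta/2$, and optimizing over $\delta$ by choosing $\delta = \sqrt{(1+8\sqrt{2})\sqrt{k}/n}$ yields the advertised rate. The only delicate step is the bookkeeping required to arrive at the precise constant $1+8\sqrt{2}$; once the moment estimates above are in hand, this amounts to a routine one-variable minimization.
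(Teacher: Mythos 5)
Your proposal is correct and follows essentially the same route as the paper: the same Brownian-motion exchangeable pair, the same identification of $\Lambda = 2kn$, $E$, and $E'$ from the trace-power Laplacian identities, the same Diaconis--Shahshahani/Cauchy--Schwarz moment bounds $\mathbb{E}[E^2]\le (2k^3+3k^2-2k)/12$ and $\mathbb{E}[(E')^2]=32k^3$, and the same optimization in $\delta$, with your normalization $W=|\Tr(U^k)|^2/k$ matching the one actually used in the paper's proof. Your added remark about tracking the hypothesis $n\ge\sum(a_i+b_i)$ in the moment formula is a reasonable (and slightly more careful) observation than the paper makes, but it does not change the argument.
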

\section{Exponentiality of circular ensemble}
As we all know, the trace power functions on unitary matrix group are actually functions of the eigenvalues. From this point of view, it's crucial to study the distribution of eigenvalues induced by the Haar measure of the Lie group. It turns out that the distribution of eigenvalues has the following form:
$$p( \theta_1, \cdots , \theta_n ) = \frac{1}{Z_{n,\beta}} \prod_{1 \leq k < j \leq n} | e^{i \theta_k} - e^{i \theta_j }|^{\beta}\ ,$$
where $ ( \theta_1, \cdots , \theta_n ) \in \mathbb{R}^n_{[0,2\pi ]}$ and $Z_{n,\beta}$ is the normalization constant. In the unitary matrix case, $\beta = 2$. The circular orthogonal ensemble ($\beta = 1$) and the circular symplectic ensemble ($\beta = 4$) also belong to this circular ensemble family. 

In this section, we generalize the results of section 5 to the circular ensemble for all $\beta \ge  1$. Since there is no underlying Lie group for general $\beta$, we cannot use the Brownian motion on the Lie group to characterize it. Instead, we need to find a diffusion process that models the circular ensemble directly. The potential function of circular ensemble is
$$H(\theta_1, \cdots , \theta_n ) = \beta \ln \prod_{1 \leq k < j \leq n} | e^{i \theta_k} - e^{i \theta_j }|\ .$$
Let's order $( \theta_1, \cdots , \theta_n )$ such that $\theta_1 \leq \cdots \leq \theta_n$ in the compact set $\mathbb{R}^n_{[0,2\pi ]}$, then the corresponding drift vector field is
\begin{align}   \nonumber
\nabla H &= \nabla_i [ \sum_{ 1 \leq k < j \leq n} \beta \ln | e^{i \theta_k} - e^{i \theta_j} | ]\frac{\partial}{\partial \theta_i}  \\
& = \nabla_i [ \sum_{ 1 \leq k < i} \beta \ln \left (2 \sin \frac{\theta_k - \theta_i}{2}\right ) + \sum_{ i < k \leq n} \beta \ln \left (2 \sin \frac{\theta_i - \theta_k}{2}\right )  ]\frac{\partial}{\partial \theta_i}\\          \nonumber
& = \frac{\beta}{2} \sum_{k \neq i} \cot \left ( \frac{\theta_i - \theta_k }{2} \right ) \ .
\end{align}
So by our formal argument, the diffusion process for a general $\beta$ ensemble satisfies the following stochastic differential equation:
$$d X^j_t = \sqrt{2} d B^j_t + \frac{\beta}{2} \sum_{ 1 \leq k \neq  j \leq n} \cot \left ( \frac{\theta_j - \theta_k }{2} \right ) dt , \ j = 1,2, \cdots , n.$$
This is exactly the circular Dyson Brownian motion (CDBM). The corresponding Witten Laplacian is $\Delta + \frac{\beta}{2} \sum_{ 1 \leq k \neq  j \leq n} \cot \left ( \frac{\theta_j - \theta_k }{2} \right ) \frac{\partial}{\partial \theta_j}$, which is the so-called Dyson operator $D$. In \cite{Webb}, the author has applies CDBM to prove a multidimensional CLT for the circular ensemble. Note that CDBM is well defined in the open simplex
$$\Sigma_{n} := \{ X = (x_1, \dots , x_n ) \in \mathbb{R}^n_{[0,2\pi ]} \ : x_1 < \cdots < x_n \}$$
and $\partial \Sigma_{n}$ is the corresponding boundary set. We summarize the results we need in \cite{cl} (see also \cite{lh} ) as the following lemma:
\begin{lem} Let $\beta \ge 1$ and suppose that the initial data $X(0) \in \Sigma_{n}$. Then there exists a unique solution to (1) in the space of continuous functions $(X(t))_{t \ge 0} \in C( \mathbb{R}_{+} , \Sigma_{n})$. Moreover, the circular ensemble is the unique equilibrium of CDBM and CDBM is reversible with respect to this distribution.
\end{lem}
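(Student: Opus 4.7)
The plan is to follow the standard strategy for proving well-posedness of Dyson-type SDEs with singular repulsive drifts, and then to verify reversibility by a direct computation with the Dyson generator.

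First, I would establish existence and uniqueness up to the first collision time. Away from $\partial \Sigma_n$, the drift $\tfrac{\beta}{2}\sum_{k\ne j}\cot((\theta_j-\theta_k)/2)$ is smooth and the diffusion coefficient is constant, so classical Picard iteration on each compact subdomain $\Sigma_n^{(\varepsilon)} := \{X \in \Sigma_n : \min_{k \ne j}|\theta_j - \theta_k| > \varepsilon\}$ produces a unique strong solution up to the stopping time $\tau_\varepsilon := \inf\{t : X(t) \notin \Sigma_n^{(\varepsilon)}\}$. Letting $\tau := \lim_{\varepsilon \downarrow 0} \tau_\varepsilon$, the task reduces to showing $\tau = \infty$ almost surely when $\beta \ge 1$, i.e., the repulsion is strong enough to prevent collisions.

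The main obstacle, and the heart of the argument, is this non-collision claim. The standard approach is a Lyapunov/McKean argument: take the candidate log-potential
\[
V(X) := -\sum_{1 \le k < j \le n}\log\bigl|2\sin((\theta_j-\theta_k)/2)\bigr|
\]
and apply It\^o's formula along the SDE. The cross terms produced by the drift and the Laplacian combine into an expression of the form $c_n - \tfrac{\beta-1}{2}\sum_{k<j}\cot^2((\theta_j-\theta_k)/2) \cdot(\text{something nonneg})$ plus a martingale, and for $\beta \ge 1$ the sign works out so that $\mathbb{E}[V(X(t \wedge \tau_\varepsilon))]$ grows at most linearly in $t$. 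Since $V$ blows up to $+\infty$ on $\partial \Sigma_n$, this forces $\tau_\varepsilon \to \infty$ as $\varepsilon \downarrow 0$, yielding a global strong solution in $C(\mathbb{R}_+, \Sigma_n)$. This is the step that genuinely uses $\beta \ge 1$; the threshold is sharp, since for $\beta < 1$ collisions occur with positive probability.

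Next I would verify that the circular ensemble measure $d\mu = \tfrac{1}{Z_{n,\beta}}\prod_{k<j}|e^{i\theta_k}-e^{i\theta_j}|^\beta \, d\theta$ is reversible. The Dyson operator can be written as $D = \Delta - \nabla H \cdot \nabla$ with $H = -\log(Z_{n,\beta}\,\tfrac{d\mu}{d\theta})$, which is exactly the Witten Laplacian form from Section 2. A direct integration by parts then gives
\[
\int_{\Sigma_n} f \,(Dg)\, d\mu = -\int_{\Sigma_n} \langle \nabla f, \nabla g\rangle\, d\mu = \int_{\Sigma_n} (Df)\, g\, d\mu,
\]
where the boundary contributions vanish because the density $\prod|e^{i\theta_k}-e^{i\theta_j}|^\beta$ vanishes on $\partial \Sigma_n$ (again using $\beta \ge 1$ so that $\nabla f \cdot d\mu$ has no boundary flux). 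This symmetry of $D$ in $L^2(d\mu)$ is exactly reversibility.

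Finally, uniqueness of the equilibrium follows from irreducibility of the diffusion on $\Sigma_n$: the process is a non-degenerate elliptic diffusion on a connected open set with a reversible probability measure $\mu$, hence ergodic, so $\mu$ is the unique invariant distribution. The bulk of the novelty really lies in the non-collision estimate; reversibility and uniqueness are then formal. Since the full argument is carried out in the cited works \cite{cl,lh}, I would simply cite those references for the detailed proof and record the statement as Lemma 6.1 for later use.
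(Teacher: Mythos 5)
Your proposal ends exactly where the paper does: Lemma 6.1 is stated there without any proof, as a summary of results imported from \cite{cl} and \cite{lh}, which is precisely the citation you fall back on. Your preliminary sketch (local well-posedness away from $\partial\Sigma_n$, a Lyapunov-type non-collision estimate using the logarithmic potential for $\beta\ge 1$, reversibility by integration by parts against the circular-ensemble density in the Witten-Laplacian form, and uniqueness of the equilibrium from ellipticity and ergodicity) is a sound outline of the standard argument carried out in those references, so there is no gap relative to the paper.
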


Now, let's define the analogue of $\Tr (U^k)$ for general $\beta$ ensemble and $k \in \mathbb{Z}$:
$$p_k (x) = \sum_{j =1}^n e^{ik x_j }, \ \ \ \  x = (x_1,\dots,x_n) \in \Sigma_{n}\ .$$
Note that this is a re-expression of $\Tr (U^k)$ if we set $(e^{ix_1}, \dots , e^{ix_n})$ as the ordered sequence of eigenvalues of unitary matrix and $\overline{ p_k} = p_{-k}$ as before. To state the main theorem of this section, we need to introduce some constants. Set $\alpha = 2/ \beta$, then $C_E$ ,$ C_{E'}$ is defined by
$$C_E = \max \{ |A -1| , |B-1| \},\ \ \ \ C_{E'} = \max \{ |A' -1| , |B'-1| \},$$
where
$$A = (1 - \frac{|\alpha - 1|}{n-2k + \alpha})^{2k},\ \ \ \ B = (1 + \frac{|\alpha - 1|}{n-2k + \alpha})^{2k},$$
and
$$A' = (1 - \frac{|\alpha - 1|}{n-4k + \alpha})^{4k},\ \ \ \ B' = (1 + \frac{|\alpha - 1|}{n-4k + \alpha})^{4k}\ .$$
As in the last section, let Z be an exponential random variable with mean 1. Then we have the following theorem:
 \begin{thm} Let $W= \frac{\beta}{2k} p_k (X) \overline{p_k}(X)$, where X is a random point sampled by the $\beta$ circular ensemble in $\Sigma_n$. Then there exists an integer $N(k,\beta)$ which depends on $k$ and $\beta$, such that for $n \ge N((k,\beta)$, the Kolmogorov distance between W and Z is bounded by:
$$2 \sqrt{\frac{1}{\sqrt{\beta} n } \sqrt{80 C_{E'} k} + \frac{1}{ \beta n} (1+ \frac{2}{e}) \sqrt{2 C_{E'} k^3}}\ .$$
\end{thm}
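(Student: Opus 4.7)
The plan is to instantiate the infinitesimal Stein's method for the exponential distribution (Theorem 5.3) using the exchangeable pair $(W,W_t)$ where $W_t = \tfrac{\beta}{2k} p_k(X_t)\overline{p_k}(X_t)$ and $X_t$ is the circular Dyson Brownian motion started from $X$. By Lemma 6.1, $X_t$ is reversible with respect to the $\beta$ circular ensemble, so when $X$ is drawn from the ensemble $(W,W_t)$ is exchangeable, and the whole machinery of Sections 2--3 can be transplanted from the compact manifold $\mathcal{O}(M)$ to the open simplex $\Sigma_n$ once condition 3 is re-examined.

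First I would compute the infinitesimal drift. The generator of CDBM is the Dyson operator $D = \Delta + \tfrac{\beta}{2}\sum_{k\neq j}\cot(\tfrac{\theta_j-\theta_k}{2})\partial_{\theta_j}$, so Ito's formula gives $\mathbb{E}[W_t-W\mid X]=t\,DW+O(t^2)$. Applying $D$ to $p_k\overline{p_k}$ symmetrically in $p_k$ and $\overline{p_k}$ will produce a principal term proportional to $(1-W)$, fixing $\Lambda$ of order $n/\beta$, plus analogues of the ``cross'' terms $p_k\!\sum_{l<k}\!\overline{p_{l,k-l}}+\overline{p_k}\!\sum_{l<k}\!p_{l,k-l}$ seen in Section 5; I would collect these into the remainder $E$. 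For the quadratic variation, Lemma 3.3 applies verbatim (its derivation uses only Ito calculus), yielding $\mathbb{E}[(W_t-W)^2\mid X]=2t\langle\nabla W,\nabla W\rangle+O(t^2)$. Expanding $\langle\nabla W,\nabla W\rangle = \tfrac{1}{2}D(W^2) - W\,DW$ gives $2\Lambda t W$ plus a residue $E'$ built from $p_{2k}\overline{p_k}^{\,2}$ and $\overline{p_{2k}}\,p_k^{2}$.

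Second, I would verify the small-increment condition 3. The Girsanov argument from Lemma 3.5 cannot be applied verbatim, because the drift of CDBM is singular near $\partial\Sigma_n$; this is exactly the ``modification'' alluded to in the introduction. Since the $\beta$-ensemble density vanishes on $\partial\Sigma_n$, $X$ almost surely sits at positive distance from the boundary, so I would stop the process at the first time it comes within a small distance $\eta>0$ of $\partial\Sigma_n$, apply Girsanov on the stopped process (the drift is bounded on the interior strip), and pass $\eta\to 0$ using the fact that the circular $\beta$-ensemble assigns no mass to boundary collisions and that $|W_t-W|\le C\|X_t-X\|$ because $p_k$ is globally Lipschitz.

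The main obstacle will be the third step: bounding $\mathbb{E}|E|^2$ and $\mathbb{E}|E'|^2$ for general $\beta$. In the unitary case, Lemma 5.5 gave exact orthogonality of the $p_\lambda$, which collapsed the cross-term sums immediately. For general $\beta$, the moments of products $p_{\lambda}\overline{p_\mu}$ under the $\beta$-ensemble are controlled by Jack-polynomial moment formulas, and they deviate from the $\beta=2$ orthogonality relations by a multiplicative factor of the form $\prod_{j}\bigl(1\pm\tfrac{|\alpha-1|}{n-2k+\alpha+\cdots}\bigr)$ lying in $[A,B]$ or $[A',B']$; this is precisely the content of the constants $C_E$ and $C_{E'}$. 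I would invoke these formulas to obtain bounds $\mathbb{E}|E|^2 \lesssim C_{E'}\,k^3$ and $\mathbb{E}|E'|^2\lesssim C_{E'}\,k$ (the prefactors $2$ and $80$ in the theorem come from carefully tracked combinatorial constants), valid whenever $n$ exceeds the threshold $N(k,\beta)$ that ensures the denominators $n-2k+\alpha$ and $n-4k+\alpha$ are positive. Inserting these bounds and $\Lambda = 2kn/\beta$ into Theorem 5.3 gives an inequality of the shape $C_1/(\beta n\delta) + \delta/2$, and optimising in $\delta$ (taking $\delta$ proportional to the square root of the bracketed quantity) produces the stated Kolmogorov bound.
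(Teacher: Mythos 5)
Your proposal follows essentially the same route as the paper: the CDBM exchangeable pair, identifying $\Lambda$, $E$, $E'$ from the Dyson operator acting on $p_k\overline{p_k}$ and from $\langle\nabla W,\nabla W\rangle$, verifying condition 3 by localizing away from $\partial\Sigma_n$ (stopping where the drift is bounded so the Lemma 3.5 Girsanov argument applies, using that $X$ is a.s.\ at positive distance from the boundary), bounding the residues via the Jiang--Matsumoto moment formulas that produce $C_E$, $C_{E'}$, and optimizing $\delta$ in Theorem 5.3. The only discrepancies are quantitative slips that the explicit computation would correct: the paper gets $\Lambda=\beta k n$ (not of order $2kn/\beta$), and the second-moment bounds are $\mathbb{E}[E^2]\le 8\beta^2 C_E k^5$ and $\mathbb{E}[(E')^2]\le 80\beta C_{E'} k^3$ rather than the orders $k^3$ and $k$ you quoted, the latter being the post-division quantities appearing inside the final square roots.
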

\begin{rem}
We need to let $n \ge N(k,\beta)$ to make the constants $A$ and $A'$ positive and less than one. This will be important in lemma 6.2 and lemma 6.3.
\end{rem}
We denote the CDBM starting from the random point X at time t as $X_t$ and let $W_t =\frac{\beta}{2k} p_k (X_t) \overline{p_k}(X_t)$. By the argument in section 2, we know that $(W,W_t)$ forms a continuous family of exchangeable pairs. To apply the Stein method, we need to generalize formulas in lemma 5.5 to general $\beta$ ensemble. This has been down in \cite{Webb}, we present the essential steps for completeness. Inspired by (1), we make an educated guess that $D p_j$ is a combination of $p_j$ and $\sum_{0 \leq l < j} p_{l}p_{j-l}$. We observe that
\begin{align}   \nonumber
\sum_{l = 0}^{j}  p_{l}p_{j-l} & = \sum_{l = 0}^{j} (\sum_{a=1}^{n} e^{ilx_a})\cdot (\sum_{b=1}^{n} e^{i(j-l)x_b}) \\ \nonumber
& = \sum_{x_a \neq x_b} \frac{e^{i(j+1)x_a} - e^{i(j+1)x_b} }{e^{ix_a} - e^{ix_b}} + (j+1)p_j\\          \nonumber
& = \sum_{x_a \neq x_b} \frac{e^{ijx_a} - e^{ijx_b} }{e^{ix_a} - e^{ix_b}}(e^{ix_a} + e^{ix_b}) - \sum_{x_a \neq x_b} \frac{e^{i(j-1)x_a} - e^{i(j-1)x_b} }{e^{ix_a} - e^{ix_b}} e^{i(x_a +x_b)}+ (j+1)p_j\ .\\ \nonumber
\end{align}
Notice the symmetry between index a and b of the first term, we have
$$\sum_{x_a \neq x_b} \frac{e^{ijx_a} - e^{ijx_b} }{e^{ix_a} - e^{ix_b}}(e^{ix_a} + e^{ix_b}) = 2 \sum_{x_a \neq x_b} \frac{e^{ijx_a} - e^{ijx_b} }{e^{ix_a} - e^{ix_b}} e^{ix_a}\ .$$
As for the second term, expand the $ e^{i(j-1)x_a} - e^{i(j-1)x_b}$ part, we have
\begin{align}   \nonumber
 \sum_{x_a \neq x_b}\frac{e^{i(j-1)x_a} - e^{i(j-1)x_b} }{e^{ix_a} - e^{ix_b}} e^{i(x_a +x_b)} &= \sum_{x_a \neq x_b} \{e^{i (l-1) x_a} e^{ix_b} + \cdots + e^{ix_a} e^{i(j - 1)x_b}\} \\\ \nonumber
& =  \sum_{x_a \neq x_b}\{\frac{e^{i(j+1)x_a} - e^{i(j+1)x_b} }{e^{ix_a} - e^{ix_b}}- 
e^{ijx_a} - e^{ijx_b}\} \\          \nonumber
& =  \sum_{l = 0}^{j}  p_{l}p_{j-l} -2(n-1)p_j\ . \nonumber
\end{align}
Combining (3) with (1) (2), we find that
\begin{align}   \nonumber
\sum_{l = 0}^{j}  p_{l}p_{j-l} & = \sum_{x_a \neq x_b} \frac{e^{ijx_a} - e^{ijx_b} }{e^{ix_a} - e^{ix_b}} e^{ix_a} + (n+j)p_j \\ \nonumber
& = \sum_{x_a \neq x_b} \frac{e^{ix_a} + e^{ix_b} }{e^{ix_a} - e^{ix_b}} e^{ijx_a} + (n+j)p_j \\          \nonumber
& = - i\sum_{x_a \neq x_b} \cot\left(\frac{x_a - x_b}{2}\right)e^{ijx_a}  + (n+j)p_j  \ .\\ \nonumber
\end{align}
Now we are ready to calculate $D p_j$:
\begin{align}   \nonumber
D p_j & = \frac{\beta}{2} \sum_{x_a \neq x_b}\cot\left(\frac{x_a - x_b}{2}\right) ij e^{ijx_a} - j^2 p_j\\ \nonumber
& = -\frac{j \beta}{2} ( \sum_{l = 0}^{j}  p_{l}p_{j-l} - (n+j)p_j) - j^2 p_j\\          
& = -\frac{j \beta}{2} \sum_{l = 1}^{j-1}  p_{l}p_{j-l} + (\frac{j^2 \beta}{2} - \frac{nj\beta}{2} - j^2)p_j \ .
\end{align}
When $\beta = 2$, we recover formula (1) of lemma 5.5. 

To verify conditions of Theorem 5.3, we need to calculate $D (p_j \overline{p_j})$. From (6.2), it's easy to derive the following formula:
$$D (p_j \overline{p_j}) = -n\beta j p_j \overline{p_j} - (2- \beta)j^2 p_j \overline{p_j} + 2 j^2 n - \frac{\beta}{2} j \sum_{l = 1}^{j-1}p_{-j} p_{l-j} p_j - \frac{\beta}{2} j \sum_{l = 1}^{j-1}p_{l} p_{j-l}\overline{p_j}\ .   \eqno{(6.3)}$$
By (1) and (6.3), we have
\begin{align}        \nonumber
\mathbb{E} [ W_t - W | W] & = t D (\frac{\beta}{2k} p_k \overline{p_k}) + O(t^2)\\            \nonumber
& = t[ \beta k n ( 1 - W) - \frac{\beta (2 - \beta)}{2}\cdot k p_k \overline{p_k} - \frac{\beta^2}{4} \sum_{l = 1}^{k-1}p_l p_{k-l} \cdot \overline{p_k} - \frac{\beta^2}{4} \sum_{l = 1}^{k-1} p_{-l}p_{l-k} \cdot p_k] + O(t^2)  \ .
\end{align}
Therefore, condition 1 of Theorem 5.3  is satisfied with $\Lambda = \beta k n$ and 
$$E = - \frac{\beta (2 - \beta)}{2}\cdot k p_k \overline{p_k} - \frac{\beta^2}{4} \sum_{l = 1}^{k-1}p_l p_{k-l} \cdot \overline{p_k} - \frac{\beta^2}{4} \sum_{l = 1}^{k-1} p_{-l}p_{l-k} \cdot p_k\ .$$
In order to identify $E'$ for the second condition of Theorem 5.3, by (2.3), we have
\begin{align}        \nonumber
\mathbb{E} [ (W_t - W)_i (W_t - W)_j | W ] &= 2 t \langle \nabla W , \nabla  W \rangle + O(t^2)\\            \nonumber
& = t[ 2\beta k n W - \frac{\beta^2}{2} p_{2k}\cdot (p_{-k})^2 - \frac{\beta^2}{2} p_{-2k}\cdot (p_k)^2] + O(t^2)  \ .
\end{align}
 This implies that condition 2 is satisfies with 
$$E' =  - \frac{\beta^2}{2} p_{2k}\cdot (p_{-k})^2 - \frac{\beta^2}{2} p_{-2k}\cdot (p_k)^2\ .$$

To check condition 3 of Theorem 5.3, we must be careful when applying lemma 3.5 to CDBM. Since the drift of CDBM becomes singular when the random particle approaches the boundary of $\sum\nolimits_{n}$, the convergence rate of $\frac{1}{t} \mathbb{E}[ (W_t - W)^2 \mathbb{I}( (W_t - W)^2 > \rho | W ]$ is not uniform. However, from the proof of lemma 3.3, we can extract the following uniform bound:
\begin{align}        \nonumber
\frac{1}{t} \mathbb{E}[ (W_t - W)^2 \mathbb{I}( (W_t - W)^2 > \rho | W ] & \leq \frac{1}{t} \mathbb{E}[ (W_t - W)^2 | W ] \\            \nonumber
& \leq \norm{ \nabla W \cdot \nabla W}_{max} + 2 \norm{W}_{max} \cdot \norm{ D_c W }_{max} \ .\\ \nonumber
\end{align}
From (6.3), $D_c W$ is a smooth function in the closure of $\sum\nolimits_{n}$, so $\norm{ D_c W }_{max}$ is well-defined. The dominated convergence theorem implies that

$$ \lim_{t \rightarrow 0} \frac{1}{t} \mathbb{E} [ | W_t - W |^2  \mathbb{I}( | W_t - W |^2 > \rho ) ] = \mathbb{E} \lim_{t \rightarrow 0}\frac{1}{t} \mathbb{E} [ | W_t - W |^2  \mathbb{I}( | W_t - W |^2 > \rho ) | W]\ .$$

To calculate $\lim_{t \rightarrow 0}\frac{1}{t} \mathbb{E} [ | W_t - W |^2  \mathbb{I}( | W_t - W |^2 > \rho ) | W]$, notice that we can find a $C > 0$ s.t
$$| W_t - W | \leq C \cdot | X_t - X|\ .$$
Define the cut-off radius $ r_X = \min \{ \frac{\sqrt{\rho}}{C}, \frac{1}{2}d \left (X,\partial \Sigma_{n} \right) \}$, where $d \left (X,\partial \Sigma_{n} \right)$ is the distance between $X$ and $\Sigma_{n}$. As before, $\tau_{r_X}$ is the first exit time of ball $B(X ,r_X)$. Then it's obvious that
$$\mathbb{E} [ | W_t - W |^2  \mathbb{I}( | W_t - W |^2 > \rho ) | W] \leq \mathbb{E} [ | W_t - W |^2  \mathbb{I}(\tau_{r_X} \leq t) | W]\ .$$
Furthermore, calculating the exit time from a compact set is a pure local thing. More precisely, we can introduce the stopped process $(U_n)_t = U_{t \wedge \tau_{r_X}}$ and denote the exit time for the stopped process as $\tau_{r_X}'$. Then 
$$\mathbb{P}( \tau_{r_X}' \leq t) = \mathbb{P}( \tau_{r_X}\leq t)\ .$$
Since the drift of the stopped process is bounded, the argument for proving lemma 3.5 applies. We conclude that
$$\lim_{t \rightarrow 0}\frac{1}{t} \mathbb{E} [ | W_t - W |^2  \mathbb{I}( | W_t - W |^2 > \rho ) | W] = 0\ .$$
It follows that
$$\lim_{t \rightarrow 0} \frac{1}{t} \mathbb{E} [ | W_t - W |^2  \mathbb{I}( | W_t - W |^2 > \rho ) ]  = 0\ .$$

The remaining tasks are to estimate the second moment of $E$ and $E'$. Following the notations in \cite{ts}, let $\rho = (\rho_1, \rho_2, \dots )$ be a partition, then the weight of $\rho$ is $|\rho| = \rho_1 + \rho_2 + \cdots$. For a partition $\rho$, $p_{\rho}$ is defined by (2.2) of \cite{ts}. To deal with the $E$ variable, We need to bound terms of the form $\mathbb{E} [ p_{\mu} \overline{p_{\nu}}]$ for partitions $\mu,\nu$ where $|\mu| = |\nu| =2$. Note that for general $\beta$, we don't have precise formula like lemma 5.4. By Theorem 1 of \cite{ts} and some calculation, we get the following upper bound:
\begin{lem} Let $C_E$ be defined as in Theorem 6.1. Then for $0 < l,j \leq k -1 $,
$$|\mathbb{E} [(p_k \overline{p_{k}})^2] \leq \frac{8 C_E}{\beta^2} k^2\ ;$$
$$|\mathbb{E} [p_l \cdot p_{k-l} \cdot p_j \cdot p_{k-j} \cdot \overline{p_k} \cdot \overline{p_k}]| \leq \frac{8 \sqrt{3} C_E}{\beta^3} k^3\ ;$$
$$ |\mathbb{E} [p_{l} \cdot p_{k-l} \cdot p_{k} \cdot \overline{p_k} \cdot \overline{p_k}]| \leq \left( \frac{2}{\beta}\right)^{\frac{5}{2}} C_E \cdot k^2 \sqrt{k}\ ;$$
$$|\mathbb{E} [(p_{-l} \cdot p_{l-k} \cdot \overline{p_k} \cdot p_{j} \cdot p_{k-j} \cdot p_{k}] \leq \frac{4 C_E}{\beta^3} k^3\ .$$
\end{lem}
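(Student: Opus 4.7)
The plan is to apply Theorem 1 of \cite{ts}, which gives an explicit formula for mixed moments $\mathbb{E}[p_\mu \overline{p_\nu}]$ over the circular $\beta$-ensemble, and to use the unitary-case values of lemma 5.5 as a reference point. Each of the four products on the left-hand side can be rewritten as a single mixed moment $\mathbb{E}[p_\mu \overline{p_\nu}]$ with $|\mu| = |\nu| = 2k$: for the first, $\mu = \nu = (k,k)$; for the second, $\mu = (k-l, l, k-j, j)$, $\nu = (k,k)$; for the third, $\mu = (k-l, l, k)$, $\nu = (k,k)$; and for the fourth, $\mu = (k-j, j, k)$, $\nu = (k-l, l, k)$.

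The next step is to apply the formula of \cite{ts} to each pair. That formula decomposes $\mathbb{E}[p_\mu \overline{p_\nu}]$ as a sum over pairings of the parts of $\mu$ with those of $\nu$; each pairing contributes a $\beta = 2$ combinatorial weight (given by lemma 5.5) multiplied by an $\alpha$-dependent correction factor, with $\alpha = 2/\beta$. The correction factor is a product of ratios of the form $(n - a + \alpha b)/(n - a + b)$, and since $|\mu| = 2k$ there are at most $2k$ such factors (this is exactly the Jack-polynomial principal specialization count, which matches the exponent in $A$ and $B$). Each individual factor lies between $1 - |\alpha - 1|/(n - 2k + \alpha)$ and $1 + |\alpha - 1|/(n - 2k + \alpha)$, so the overall correction lies in $[A, B]$ and its distance from $1$ is at most $C_E$.

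Finally, I would combine the correction bound with the $\beta = 2$ combinatorial values. For the first bound, the unique non-vanishing pairing of $(k,k)$ with itself contributes $2k^2$, giving a total of order $C_E k^2$ after absorbing the normalization $\alpha^{l(\mu)} = (2/\beta)^{l(\mu)}$ from \cite{ts} into the factor $\beta^{-2}$. For the second through fourth bounds one must enumerate the pairings of parts carefully: by lemma 5.5 the $\beta = 2$ contribution of a pairing vanishes unless the multisets of parts match, so only coincidences among the indices $l, k-l, j, k-j, k$ produce surviving terms; summing over $l$ (and also over $j$ for the second and fourth) gives the $k^3$, $k^{5/2}$, $k^3$ scalings in the stated bounds.

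The main obstacle is this combinatorial enumeration, especially separating the cases $l = k-l$ (which collapses a two-part contribution to a one-part contribution and changes the combinatorial weight) from the generic case, and tracking which pairings of $\mu$ with $\nu$ give Lemma 5.5 a non-zero value after identification of parts. The explicit constants $8$, $8\sqrt{3}$, $(2/\beta)^{5/2}$, $4$ then follow from adding the surviving contributions with their combinatorial factors; the condition $n \ge N(k,\beta)$ is used exactly to ensure $n - 2k + \alpha > 0$, so that the $\alpha$-correction factor is well-defined and sits inside $[A,B]$.
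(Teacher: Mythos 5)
The paper does not actually spell out a proof of this lemma (it is asserted via Theorem 1 of \cite{ts} and ``some calculation''), so the real question is whether your outline would deliver the stated bounds, and it would not. The central gap is your structural claim that $\mathbb{E}[p_\mu\overline{p_\nu}]$ splits into pairings, each contributing a $\beta=2$ weight from lemma 5.5 times a correction factor lying in $[A,B]$. If that were the structure, the bounds you could extract would be of the form $(1+C_E)\times(\beta=2\ \text{value})$, never $C_E\times(\cdot)$: a factor within distance $C_E$ of $1$ does not shrink the main term. Conversely, for the second and third inequalities the $\beta=2$ value is identically zero --- by lemma 5.4 the moment vanishes unless the part multisets agree, and $\{l,k-l,j,k-j\}$ or $\{l,k-l,k\}$ can never equal $\{k,k\}$, no matter what coincidences occur among $l,j,k-l,k-j$ --- so your mechanism would predict these moments vanish identically, whereas for $\beta\neq 2$ they are nonzero and the entire content of the lemma is that they are small of order $C_E$. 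The pairing enumeration you identify as the ``main obstacle'' is therefore not the relevant combinatorial object.

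What the intended ``some calculation'' has to use is the expansion from \cite{ts} of $\mathbb{E}[p_\mu\overline{p_\nu}]$ as a sum over partitions $\lambda\vdash 2k$ of Jack-character products $\theta^\lambda_\mu\theta^\lambda_\nu$ times an $n$-dependent factor which is a product over the $2k$ boxes of $\lambda$ of ratios lying in $\left[1-\frac{|\alpha-1|}{n-2k+\alpha},\,1+\frac{|\alpha-1|}{n-2k+\alpha}\right]$, hence in $[A,B]$ (this is where the exponent $2k$ in $A,B$ comes from). For $\mu\neq\nu$ the character sum with the box factor replaced by its $\lambda$-independent limit cancels by orthogonality; only the deviation survives, it is bounded by $C_E$ times $\sum_\lambda|\theta^\lambda_\mu\theta^\lambda_\nu|$ with the appropriate weights, and Cauchy--Schwarz converts this into $C_E\sqrt{M_\mu M_\nu}$ with $M_\mu=\alpha^{\ell(\mu)}z_\mu$ the diagonal value; the worst case $l=j=k/2$ then reproduces exactly the constants $8\sqrt{3}/\beta^3$ and $(2/\beta)^{5/2}$ in the second and third bounds. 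Your proposal never invokes this cancellation step, and without it the $C_E$-proportional bounds are unreachable. Finally, be aware that the first and fourth inequalities cannot be proved as literally stated: at $\beta=2$ one has $C_E=0$ while $\mathbb{E}[(p_k\overline{p_k})^2]=2k^2$ and, taking $j=l$, $\mathbb{E}[p_{-l}p_{l-k}\overline{p_k}\,p_jp_{k-j}p_k]=l(k-l)k$; they only make sense as bounds on the deviation from the corresponding diagonal main terms (or with $\{j,k-j\}\neq\{l,k-l\}$ in the fourth), so any correct write-up must either reformulate them or carry the main term, which your argument, claiming to prove them as stated from a correction factor in $[A,B]$, does not address.
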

\noindent From (10),
\begin{align}        \nonumber
\mathbb{E}[E^2] &\leq\ \ \  \frac{\beta^2 (2 - \beta)^2}{4} k^2 (p_k \overline{p_k})^2 + \frac{\beta^4}{16} (\sum_{l=1}^{k-1} p_l p_{k-l} \overline{p_k})^2 +  \frac{\beta^4}{16} (\sum_{l=1}^{k-1} p_{-l} p_{l-k} p_k)^2          \\            \nonumber
& \ \ + \frac{\beta^3 (2 - \beta)}{4} \left(\sum_{l=1}^{k-1}k p_l p_{k-l} p_k (\overline{p_k})^2 + \sum_{l=1}^{k-1} k p_{-l} p_{l-k} \overline{p_k} (p_k)^2 \right) + \frac{\beta^4}{8} (\sum_{l=1}^{k-1} p_{-l} p_{l-k} p_k ) \cdot (\sum_{l=1}^{k-1} p_l p_{k-l} \overline{p_k}) \ .\\ \nonumber
\end{align}
By the previous lemma, we have
\begin{align}        \nonumber
\mathbb{E}[E^2] &\leq 2(2 - \beta)^2 C_E \cdot k^4 + \sqrt{3} \beta C_E k^3 (k-1)^2 + |4-2\beta|\sqrt{2\beta} C_E k^3 (k-1) \sqrt{k} + \frac{\beta C_E}{2} k^3 (k-1)^2  \\            
& \leq 8 \beta^2 C_E \cdot k^5 \ .    \tag{6.4}               
\end{align} 

Now, for the $E'$ variable, we need to  bound $\mathbb{E} [ p_{\mu} \overline{p_{\nu}}]$ for partitions $\mu,\nu$ where $|\mu| = |\nu| =4$. Applying Theorem 1 of \cite{ts} again, we have the following result.
\begin{lem} Let $C_E'$ be defined as in Theorem 6.1. Then for $0 < l,j \leq k -1 $,
$$|\mathbb{E} [p_{2k} \cdot p_{2k} \cdot (p_{-k})^4 ]| \leq \frac{64 \sqrt{3} C_{E'}}{\beta^3} k^3\ ;$$
$$|\mathbb{E} [p_{-2k} \cdot p_{2k} \cdot (p_{-k})^2  \cdot (p_{k})^2]| \leq \frac{32 C_{E'}}{\beta^3} k^3\ .$$
\end{lem}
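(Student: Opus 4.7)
The plan is to invoke Theorem 1 of \cite{ts}, which expresses $\mathbb{E}[p_\mu \overline{p_\nu}]$ under the circular $\beta$-ensemble as a finite sum indexed by Jack polynomials with parameter $\alpha = 2/\beta$. Both expectations in the lemma involve partitions of total weight $4k$: the first corresponds to the off-diagonal pair $(\mu,\nu) = ((2k,2k),(k,k,k,k))$, and the second to the diagonal case $\mu = \nu = (2k,k,k)$. The proof will therefore mirror the derivation of Lemma 6.3 but with the denominator in the definition of $A'$ and $B'$ shifted from $n-2k+\alpha$ to $n-4k+\alpha$, reflecting the doubling of total partition weight from $2k$ to $4k$.

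For the first bound, the CUE baseline at $\alpha = 1$ vanishes by Lemma 5.5, since the partitions $(2k,2k)$ and $(k,k,k,k)$ are distinct and power sums are orthogonal in the $\beta = 2$ case. Theorem 1 of \cite{ts} then produces a rational function of $(\alpha, n, k)$ which is zero at $\alpha = 1$. A telescoping comparison of its numerator with the geometric product $\prod_{j=1}^{4k}(1 + |\alpha-1|/(n - 4k + j\alpha))$ bounds its absolute value by a constant multiple of $C_{E'}$. Collecting the Jack-polynomial combinatorial prefactors associated with the two partitions $(2k,2k)$ and $(k,k,k,k)$ then produces the claimed $\frac{64\sqrt{3}}{\beta^3}$ coefficient.

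For the second bound, with $\mu = \nu = (2k,k,k)$, one is on the diagonal and Theorem 1 of \cite{ts} yields an expression of the form $(2/\beta)^{\ell(\mu)} z_\mu^{-1}$ times a rational correction in $(\alpha, n, k)$; here $\ell(\mu) = 3$ and $z_\mu$ is the standard symmetric-function normalization, accounting for the factors of $\beta^{-3}$ and $k^3$. The deviation of this correction from its $\alpha = 1$ value is controlled by the same product estimate as above, and extracting the $C_{E'}$ prefactor yields the $\frac{32}{\beta^3}$ constant.

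The main technical obstacle is the Jack polynomial bookkeeping: one must identify which cross-terms in the expansion contribute nontrivially and show that the remaining terms are subleading in $n$. The hypothesis $n \geq N(k,\beta)$ is used both to keep each denominator $n - 4k + j\alpha$ positive for $1 \leq j \leq 4k$ and to legitimize the geometric product estimate that defines $A'$ and $B'$; once this is in place, both bounds follow by collecting constants in exactly the manner of Lemma 6.3.
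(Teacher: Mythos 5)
Your overall route is the same as the paper's (the paper gives no details beyond invoking Theorem 1 of \cite{ts}), and for the first inequality your outline is sound: writing $p_{2k}p_{2k}(p_{-k})^4 = p_{(2k,2k)}\overline{p_{(k,k,k,k)}}$, the $\alpha=1$ value vanishes because the two partitions are distinct, and the finite-$n$ correction factors in the Jiang--Matsumoto formula lie between $A'$ and $B'$, so the whole expectation is bounded by $C_{E'}$ times the sum of the absolute values of the Jack-character coefficients. What you do not supply is the step that actually produces $64\sqrt{3}$: one bounds that coefficient sum by Cauchy--Schwarz together with orthogonality of Jack characters, giving $\sqrt{z_{(2k,2k)}\,z_{(k,k,k,k)}}\,\alpha^{3} = \sqrt{8k^2\cdot 24k^4}\,(2/\beta)^3 = 64\sqrt{3}\,k^3/\beta^3$. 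That is the missing bookkeeping, but it is consistent with your plan.

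The genuine gap is in the diagonal case. Your mechanism --- ``the deviation of this correction from its $\alpha=1$ value is controlled by the same product estimate, and extracting the $C_{E'}$ prefactor yields the $32/\beta^3$ constant'' --- cannot yield the stated bound on $|\mathbb{E}[p_{-2k}p_{2k}(p_{-k})^2(p_k)^2]|$, because the $\alpha=1$ baseline is not zero: by the Diaconis--Shahshahani moments already used in Section 5, at $\beta=2$ this expectation equals $2k\cdot 2k^2 = 4k^3$, and for general $\beta$ the main term coming out of Theorem 1 of \cite{ts} is $z_{(2k,k,k)}\alpha^{3} = 32k^3/\beta^3$. A deviation-from-$\alpha=1$ estimate therefore bounds only $|\mathbb{E}[p_{(2k,k,k)}\overline{p_{(2k,k,k)}}] - 32k^3/\beta^3|$ by $32C_{E'}k^3/\beta^3$; it does not bound the absolute value itself, and indeed the absolute value tends to $32k^3/\beta^3>0$ while $C_{E'}\to 0$ as $n\to\infty$, so the inequality as you argue it (and as literally stated) cannot follow from your argument. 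To make your proof coherent you must either present the second estimate as a deviation bound about the main term $z_\mu\alpha^{\ell(\mu)}$, or keep the main term and obtain something like $32(1+C_{E'})k^3/\beta^3$. Relatedly, your formula for the diagonal leading term should be $z_\mu\,(2/\beta)^{\ell(\mu)}$, not $(2/\beta)^{\ell(\mu)}z_\mu^{-1}$; with $z_{(2k,k,k)}=4k^3$ this is what accounts for the $k^3$ and $\beta^{-3}$.
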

It follows that
\begin{align}        \nonumber
\mathbb{E}[E'^2] &= \frac{\beta^4}{4} ( p_{-k}^4 \cdot p_{2k} p_{2k} + p_k^4 \cdot p_{ -2k}p_{-2k} + 2 p_{-2k}p_{-k}^2 \cdot p_{2k}  p_k^2) \\            \nonumber
& \leq 32 \sqrt{3} \beta C_{E'} \cdot k^3 + 16 \beta C_{E'} \cdot k^3 \\ \nonumber
& \leq 80 \beta C_{E'} \cdot k^3 \ .      \tag{6.5}
\end{align}
Combining (6.4) (6.5) with Theorem 5.3, we conclude that for $\forall \delta > 0$,
$$d_K (W , Z) \leq \frac{1}{\beta k n \delta} (1+ \frac{2}{e}) \sqrt{8 \beta^2 C_E k^5} + \frac{2}{\beta k n \delta} \sqrt{80 \beta C_{E'}  k^3} + \frac{\delta}{2}\ .$$
Let $\delta$ be the optimal value such that the right hand side achieves the minima, then we get
$$d_K (W , Z) \leq 2 \sqrt{\frac{1}{\sqrt{\beta} n } \sqrt{80 C_{E'} k} + \frac{1}{ \beta n} (1+ \frac{2}{e}) \sqrt{2 C_{E'} k^3}}\ .$$

\noindent{\bf Acknowledgements. }The author is grateful to Prof. Elton Hsu for many helpful  discussions on understanding stochastic calculus on manifolds and reading part of the manuscript.


\begin{thebibliography}{20}
\bibitem{al}Alexei Borodin and Leonid Petrov. (2014).
\textit{Integrable probability: From representation theory to Macdonald processes}. 
Probab. Surveys.Volume 11, 1-58.

\bibitem{bs} Benoıt Collins and Sho Matsumoto. (2017).
\textit{Weingarten calculus via orthogonality
relations: new applications}. 
ALEA, Lat. Am. J. Probab. Math. Stat. . 14, 631–656.

\bibitem{cl}Cépa E. and Lépingle D. (2001).
\textit{Brownian particles with electrostatic repulsion on the circle:
Dyson’s model for unitary random matrices revisited}. 
ESAIM Probab. Statist.5, 203–224.


\bibitem{cha}Chatterjee. S. (2014).
\textit{A short survey of Stein's method}. 
arXiv:1404.1392.

\bibitem{cdm05}
 Chatterjee. S, Diaconis. P, Meckes. E. (2005).
 \textit{Exchangeable pairs and Poisson approximation}. 
 Probab. Surv.64--106.  

\bibitem{cfr}Chatterjee. S, Fulman. J and Rollin. A. (2011).
\textit{Exponential approximation
by Stein’s method and spectral graph theory}. 
ALEA Lat. Am. J. Probab.Math. Stat. 8, 197-223.

\bibitem{chen75} 
Chen, L. H. Y. (1975).
\textit{Poisson approximation for dependent trials}. 
Ann. Probab. No. 3, 534--545. 

\bibitem{beta} 
Christian Döbler. (2015).
\textit{Stein’s method of exchangeable pairs for the Beta
distribution and generalizations
}. 
Electron. J. Probab. Volume 20.

\bibitem{Webb}Christian Webb. (2016).
\textit{Linear statistics of the circular $\beta$-ensemble, Stein’s method, and circular Dyson Brownian motion}. 
Electron. J. Probab, Volume 21.

\bibitem{cl}Christophe Ley. (2020).
\textit{Gauss and the identity function -- a tale of characterizations of the normal distribution}. 
arXiv:2003.01827.

\bibitem{cd}Coram, M. and Diaconis, P. (2003).
\textit{New tests of the correspondence between
unitary eigenvalues and the zeros of Riemann’s zeta function}. 
J. Phys.
A. 36 , 2883-2906.

\bibitem{dim}D. Bakry, I. Gentil and M. Ledoux. (2014).
\textit{Analysis and geometry of Markov diffusion operators}. 
 Grundlehren der Mathematischen Wissenschaften 348, Springer, Cham.

\bibitem{ds}Diaconis, P. and Shahshahani, M. (1994).
\textit{On the eigenvalues of random matrices. Studies in applied probability}. 
J. Appl. Probab. 31A, 49-62.

\bibitem{Hsu}
Elton. Hsu.(2002).
\textit{Stochastic Analysis On Manifolds}. 
Graduate Studies in Mathematics.Volume 38.

\bibitem{fyw}Feng-Yu Wang. (2013).
\textit{Analysis for Diffusion Processes on Riemannian Manifolds}. 
Advanced Series on Statistical Science \& Applied Probability: Volume 18.

\bibitem{fm}Firk, F.W.K.; Miller, S.J. (2009).
\textit{Nuclei, Primes and the Random Matrix Connection}. 
Symmetry. 1, 64-105.

\bibitem{Fj} Fulman, J. (2009).
 \textit{Stein's method and characters of compact Lie groups}.
Comm. Math. Phys. 288, 1181-1201.


\bibitem{ja} Fulman. J, Adrian Röllin. (2011).
\textit{Stein's method, heat kernel, and linear functions on the orthogonal groups}. 
arXiv:1109.2975.

\bibitem{Fj10} Fulman, J. (2012).
\textit{Stein's method, heat kernel, and traces of powers of elements of compact Lie groups}.
Electron. J. Probab. Volume 17.

\bibitem{jn} 
Fulman.J, Nathan Ross. (2012).
\textit{Exponential approximation and Stein's method of exchangeable pairs
}.
Latin American journal of probability and mathematical statistics.Volume 10.

\bibitem{lam1}
Gaultier Lambert. (2019).
\textit{Mesoscopic central limit theorem for the circular beta-ensembles and applications}. 
arXiv:1902.06611.

\bibitem{lam2}
Gaultier Lambert, Michel Ledoux, Christian Webb. (2019).
\textit{Quantitative normal approximation of linear statistics of $\beta$-ensembles}. 
Ann. Probab. Volume 47.

\bibitem{hg}H. L. Gan. (2015).
\textit{Conditional Poisson process approximation}. 
arXiv:1511.03251.

\bibitem{hr}Hughes, C. P. and Rudnick, Z. (2003).
\textit{Mock-Gaussian behaviour for linear
statistics of classical compact groups}. 
Random matrix theory. J. Phys.
A. 36, 2919-2932.


\bibitem{hakc}Huiling Le, Alexander Lewis, Karthik Bharath, Christopher Fallaize. (2020).
\textit{A diffusion approach to Stein's method on Riemannian manifolds}. 
arXiv:2003.11497.

\bibitem{ks} Kohei Suzuki. (2019).
\textit{Convergence of Brownian motions on metric measure
spaces under Riemannian Curvature–Dimension}. 
Electron. J. Probab. No. 102, 1–36.

\bibitem{kg}Kurt Johansson, Gaultier Lambert. (2020).
\textit{Multivariate normal approximation for traces of random unitary matrices}. 
arXiv:2002.01879.

\bibitem{lh}László Erdős, Horng-Tzer Yau. (2017).
\textit{A Dynamical Approach to Random Matrix Theory}. 
Courant Lecture Notes. Volume 28.

\bibitem{Lt} Levy, T.(2008).
\textit{Schur-Weyl duality and the heat kernel measure on the unitary group}.
Adv. Math. 537-575.

\bibitem{meckes-thesis}
Meckes, E. (2006).
\textit{ An infinitesimal version of Stein's method of exchangeable pairs}.
Doctoral dissertation, Stanford University.

\bibitem{M07}
Meckes, E. (2007).
\textit{ Linear functions on the classical matrix groups}.
Trans. Amer. Math. Soc. 360, 5355-5366.

\bibitem{meckes-L}
Meckes, E. (2007).  
\textit{On the approximate normality of eigenfunctions of the Laplacian}.  
Trans. Amer. Math. Soc. 361, 5377-5399.


\bibitem{El2} 
Meckes, E. (2009).
\textit{On Stein’s method for multivariate
normal approximation}.
 High dimensional probability V: the Luminy volume.

\bibitem{pt}P.J. Forrester, T. Nagao. (1998).
\textit{Correlations for the Dyson Brownian motion model with Poisson initial conditions}. 
arXiv:cond-mat/9805330.

\bibitem{sxy}Songzi Li, Xiang-Dong Li, Yong-Xiao Xie. (2013).
\textit{Generalized Dyson Brownian motion, McKean-Vlasov equation and eigenvalues of random matrices}. 
arXiv:1303.1240.




\bibitem{cs72}Stein, Charles. (1972).
\textit{A bound for the error in the normal approximation to the distribution of a sum of dependent random variables}. 
Proceedings of the Sixth Berkeley Symposium on Mathematical Statistics and Probability, Volume 2: Probability Theory, 583--602.

\bibitem{ts}Tiefeng Jiang and Sho Matsumoto. (2015).
\textit{Moments of traces of circular beta-ensembles}. 
Ann. Probab. Volume 43, Number 6.


\bibitem{vjr} Vadim Kostrykin, Jürgen Potthoff and Robert Schrader. (2012).
\textit{Brownian motions on metric graphs}. 
Journal of Mathematical Physics 53, 095206.



\bibitem{wwr} Wei Huang, Weitao Du, Richard Yi Da Xu. (2020).
\textit{On the Neural Tangent Kernel of Deep Networks with Orthogonal Initialization}. 
arXiv:2004.05867.



\bibitem{xiang12} Xiang-Dong Li. (2012).
\textit{Perelman’s entropy formula for the Witten Laplacian
on Riemannian manifolds via Bakry–Emery Ricci
curvature}. 
Math. Ann.353:403–437.


\end{thebibliography}
\end{document}